\documentclass[a4paper,10pt]{article}

\usepackage{graphicx}
\usepackage{geometry}
\usepackage{amsthm}
\usepackage{amssymb}
\usepackage{amsmath}
\usepackage{hyperref}
\usepackage{xcolor}
\usepackage{enumerate}
\usepackage{listings}
\usepackage{complexity}
\usepackage{blkarray}
\usepackage{lmodern}

\usepackage[font=small,labelfont=bf]{caption}
\usepackage[font=small,labelfont=normalfont,labelformat=simple]{subcaption}
\graphicspath{{figs/}}

\newtheorem{theorem}{Theorem}
\newtheorem{definition}{Definition}
\newtheorem{lemma}[theorem]{Lemma}
\newtheorem{corollary}[theorem]{Corollary}

\newtheorem{observation}{Observation}
\newtheorem{proposition}{Proposition}
\newtheorem{conjecture}{Conjecture}
\newtheorem{problem}{Problem}
\newtheorem{remark}{Remark}

\newcommand{\revvec}[1]{\overset{\text{\tiny$\longleftarrow$}}{#1}}
\newcommand{\bivec}[1]{\overset{\text{\tiny$\longleftrightarrow$}}{#1}}

\author
{
Raphael Steiner \thanks{Department of Mathematics, ETH Z\"{u}rich, Switzerland,  \texttt{raphaelmario.steiner@math.ethz.ch}.
Supported by the SNSF Ambizione Grant No. 216071.}
}
\date{\today}

\title{Openly disjoint cycles and directed tree-width\\ of regular digraphs}

\begin{document}
\maketitle

\begin{abstract}
Given a digraph $D$, let $c(D)$ denote the largest integer $k$ such that there exists a collection of $k$ openly disjoint cycles through a vertex, i.e., a collection of directed cycles $C_1,\ldots,C_k$ all containing a common vertex $v$ such that $C_1-v,\ldots,C_k-v$ are pairwise vertex-disjoint. The famous Caccetta-Häggkvist conjecture and its regular variant due to Behzad, Chartrand and Wall from 1970, have motivated the study of degree conditions forcing $c(D)$ to be large.

Surprisingly, in 1985 Thomassen constructed digraphs of arbitrarily high minimum out- and in-degree such that $c(D)\le 2$. In 2005, Seymour asked whether in contrast every $r$-regular digraph satisfies $c(D)=r$, which would have implied the aforementioned conjecture of Behzad, Chartrand and Wall. In 2008, Mader answered this negatively for every $r\ge 8$, but conjectured that nevertheless the minimum value $c_r$ of $c(D)$ over all $r$-regular digraphs grows with $r$, i.e. $\lim_{r\rightarrow\infty}c_r=\infty$. 

As the first main result of our paper, we prove Mader's conjecture in a strong form by showing that $c_r\ge \lceil\frac{3}{22} r\rceil$ for every $r\in \mathbb{N}$. We also show that 
$c_r\le 7\left\lceil \frac{r}{8}\right\rceil$, which improves the previous best upper bound $c_r\le r-\Theta(\sqrt{r})$ due to Mader.

In the second main result of this paper we use the same proof technique to show that every $r$-regular digraph has directed tree-width at least $\Omega(r)$. This bound is tight up to the implied constant and cannot be extended to digraphs of minimum out- and in-degree at least $r$, where we point out that no growing lower bound on the directed tree-width can be guaranteed. As a corollary we obtain the existence of a function $f:\mathbb{N}\rightarrow \mathbb{N}$ such that every regular digraph with degree at least $f(k)$ contains a subdivision of the cylindrical wall of order $k$, and hence of a large class of planar digraphs. This represents new progress on the notoriously difficult problem of finding degree conditions guaranteeing subdivisions of digraphs, going back to another, well-known, conjecture of Mader from 1985.
\end{abstract}

\section{Introduction}
The infamous \emph{Caccetta-Häggkvist-conjecture}, proposed by Caccetta and Häggkvist~\cite{MR527946} in 1978, is one of the most important open problems in all of extremal graph theory. Recall that the \emph{girth} of a digraph is defined as the length of its shortest directed cycle.
\begin{conjecture}[Caccetta and Häggkvist 1978~\cite{MR527946}]\label{con:1}
Let $r,g\in \mathbb{N}$. Every digraph $D$ with minimum out-degree $\delta^+(D)\ge r$ and girth at least $g$ contains at least $r(g-1)+1$ vertices.
\end{conjecture}

Despite enormous efforts put into attacking this conjecture by many leading researchers in the past, it remains widely open nowadays, see the survey~\cite{sullivan2006} for a (by now $20$ years old) summary of partial results and related problems. Strikingly, the conjecture even remains open when $g=4$, see~\cite{MR3638333} for the best known partial result in this case. The conjecture also remains wide open in the important special case of $r$-regular digraphs (i.e., digraphs where every vertex has out- and in-degree exactly $r$). In fact, this special case was separately conjectured already 8 years earlier by Behzad, Chartrand and Wall:

\begin{conjecture}[Behzad, Chartrand and Wall 1970,~\cite{MR285448}]\label{con:2}
Let $r,g\in\mathbb{N}$. Every $r$-regular digraph with girth at least $g$ has at least $r(g-1)+1$ vertices.
\end{conjecture}

A very natural approach towards Conjectures~\ref{con:1},~\ref{con:2} is to try and show that every digraph of minimum out-degree at least $r$ (or every $r$-regular digraph) contains a sequence of $r$ directed cycles with very small overlaps. Concretely, the following so-called \emph{Hoang-Reed conjecture} from 1978 still remains open and would directly imply the Caccetta-Häggkvist conjecture.

\begin{conjecture}[Hoang and Reed~1978,~\cite{MR900933}]
Let $r\in \mathbb{N}$. Every digraph $D$ with $\delta^+(D)\ge r$ contains a sequence $C_1,\ldots,C_r$ of directed cycles such that for each $i\in [r]$ it holds that $$\left|V(C_i)\cap \bigcup_{1\le j<i}V(C_j)\right|\le 1.$$
\end{conjecture}

A particularly simple instance of a collection of directed cycles satisfying the constraints in the Hoang-Reed conjecture is a collection of \emph{openly disjoint cycles through a vertex} in a digraph $D$. Concretely, this is defined as a collection $C_1,\ldots,C_k$ of directed cycles in $D$ such that there exists a vertex $v\in V(D)$ common to all cycles $C_1,\ldots,C_k$ and such that $V(C_1)\setminus \{v\},\ldots, V(C_k)\setminus \{v\}$ are pairwise disjoint. Throughout the rest of this paper and following the notation in~\cite{MR2588727,MR2672214}, given a digraph $D$ we denote by $c(D)$ the maximum size of a collection of openly disjoint directed cycles through a vertex in $D$. A natural refinement of the Hoang-Reed conjecture is then to ask whether every digraph $D$ with $\delta^+(D)\ge r$ satisfies $c(D)\ge r$. Thomassen showed that this is indeed the case when $r=2$. \begin{theorem}[Thomassen~1985,~\cite{MR793632}]
Every digraph $D$ with $\delta^+(D)\ge 2$ satisfies $c(D)\ge 2$. 
\end{theorem}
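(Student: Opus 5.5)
We argue by contradiction, using a minimal counterexample. Let $D$ satisfy $\delta^+(D)\ge 2$ and $c(D)\le 1$, and choose $D$ with $|V(D)|+|A(D)|$ as small as possible. Deleting arcs from a vertex of out-degree above $2$ preserves $\delta^+\ge 2$ and cannot increase $c$. So we may assume every vertex has out-degree exactly $2$. Next we reduce to the strongly connected case. The condensation of $D$ has a terminal strong component $S$, and every arc leaving a vertex of $S$ stays in $S$. Hence $D[S]$ still has minimum out-degree $2$, and minimality gives $D=D[S]$, so $D$ is strongly connected.

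The main tool will be contraction. Pick a vertex $v$ and an out-neighbour $u$, and contract the arc $vu$ into a single vertex $w$. We delete loops and parallel arcs, and must check that $w$ and all other vertices keep out-degree at least $2$. The point is to find an arc $vu$ whose contraction loses nothing. The contracted digraph $D'$ is smaller, so by minimality $c(D')\ge 2$. That gives a vertex $x$ of $D'$ and two cycles through $x$ that are otherwise disjoint. We then lift these cycles back to $D$.
\begin{itemize}
\item If neither cycle uses $w$, or $x=w$, the cycles lift to two cycles through $v$ or $u$. Here we replace $w$ by the subpath $v\to u$ where needed, which can only merge at a single vertex.
\item If exactly one cycle passes through $w$ and $x\ne w$, expanding $w$ into $v$ or $vu$ still gives openly disjoint cycles through $x$.
\end{itemize}
The contradiction follows once we can always find a contractible arc.

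The main obstacle is choosing the arc so that contraction keeps $\delta^+\ge 2$ and does not create a loop that destroys the degree at $w$. Contracting $vu$ lowers out-degrees only in two ways:
\begin{itemize}
\item $w$ loses out-degree when $N^+(v)\cup N^+(u)\setminus\{u,v\}$ has fewer than $2$ elements.
\item A vertex $z$ loses out-degree when it has both $u$ and $v$ as out-neighbours.
\end{itemize}
To handle the first case, we first observe that $D$ has no $2$-cycles through a vertex with two distinct $2$-cycles, since that would already give $c\ge 2$. The second case is exactly a "transitive triangle" $z\to v$, $z\to u$, $v\to u$.

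So the key step is to take a shortest cycle $C$, or an induced cycle, and argue about an arc $vu$ on it. If every arc of $C$ is blocked by some $z$ with $z\to v$, $z\to u$, we use the second out-arcs to find a chord that yields a shorter cycle. Alternatively, the second out-arcs produce two cycles through a common vertex meeting only there, which is again a contradiction.

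If this local analysis becomes messy, we would fall back on Thomassen's original argument. That argument follows, from each vertex, a second out-arc to build an "ear". Ears must either close at the starting vertex, giving two openly disjoint cycles, or hit an earlier ear. The whole structure is then analysed via a longest-path or last-hit argument: take a vertex $v$ whose out-branches both return, and use minimality of the in-between path.

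I expect verifying the existence of a good contractible arc, including the degenerate $2$-cycle configurations, to be the real work.
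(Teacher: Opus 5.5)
The paper does not prove this statement; it is quoted from Thomassen's 1985 paper. Your proposal therefore has to stand on its own, and it has a genuine gap.

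Your reductions to out-degree exactly $2$ and to strong connectivity are fine. The lifting step that the whole contraction strategy rests on, however, is false for digraphs. After contracting $vu$ into $w$, an arc $y\to w$ of $D'$ may come only from $y\to u$, and an arc $w\to z$ may come only from $v\to z$. A cycle of $D'$ that enters and leaves $w$ this way has no lift at all, since inside $\{u,v\}$ you can only move from $v$ to $u$ (unless $uv$ is also an arc). For $x=w$ it is worse. If both cycles enter $w$ via arcs into $v$ and leave via arcs out of $u$, both lifts contain the arc $vu$ and share two vertices. If one lift passes only through $v$ and the other only through $u$, the lifts are vertex-disjoint. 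So the claim that the lifted cycles ``can only merge at a single vertex'' is wrong, and keeping $\delta^+\ge 2$ is not the only obstacle. Even for an arc with $d^-(u)=1$, which forces every lift to enter at $v$, two cycles through $w$ that both leave via $u$ lift to cycles sharing $\{u,v\}$.

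Second, you never argue that a suitable arc exists, which you yourself call the real work. A shortest cycle $C$ is chordless, so every vertex of $C$ has its second out-neighbour off $C$. Every blocking vertex $z$ with $z\to c_i$, $z\to c_{i+1}$ also lies off $C$. So there is no chord to find, and the ``alternative'' and the appeal to Thomassen's ear argument are statements of intent rather than proofs.

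If you want to keep a minimal counterexample, vertex deletion works where arc contraction does not. Suppose $x$ has in-degree $1$ with in-neighbour $y$. Delete $x$ and give $y$ an arc to an out-neighbour $a\notin N^+(y)\cup\{y\}$ of $x$. Only $y$ sends an arc to $x$, so all out-degrees stay $2$. Two openly disjoint cycles cannot both use the new arc, so they lift by reinserting $x$. If no such $a$ exists, then $N^+(x)=\{y,t\}$ and $N^+(y)=\{x,t\}$. In that case the $2$-cycle $yxy$, together with $y\to t$ followed by a $t$--$y$ path (which avoids $x$), already gives two cycles meeting only in $y$. Hence a minimal counterexample has all in-degrees at least $2$ and is therefore $2$-regular. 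That case, i.e.\ $c_2=2$, is where a genuine argument is required, and your proposal does not address it.
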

However, and quite surprisingly, this approach completely breaks down when considering digraphs of minimum out- and in-degree at least $r\ge 3$, as Thomassen showed in another paper:

\begin{theorem}[Thomassen 1985~\cite{MR793491}]\label{thm:thomassenconstruction}
For every $r\in\mathbb{N}$ there exists a digraph $D$ with $\delta^+(D),\delta^-(D)\ge r$ but $c(D)\le 2$.
\end{theorem}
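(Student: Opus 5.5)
The plan is to build $D$ so that its cycles are forced through a small set of "bottleneck" vertices, arranged so that any two cycles through a common vertex $v$ meet again away from $v$ most of the time. I would use the following reformulation throughout. Split $v$ into $v^{\mathrm{out}}$ and $v^{\mathrm{in}}$. Then $c(D)\le 2$ means that for every vertex $v$ there is no family of three internally disjoint $v^{\mathrm{out}}$--$v^{\mathrm{in}}$ paths. By Menger's theorem this is equivalent to the existence, for every $v$, of a set $S_v\subseteq V(D)\setminus\{v\}$ with $|S_v|\le 2$ that meets every cycle through $v$ in a vertex other than $v$. So the task is to build, for each $r$, a digraph with $\delta^+,\delta^-\ge r$ in which every vertex has such a $2$-element "cycle transversal relative to $v$".

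A first attempt shows what has to be overcome. Take an acyclic digraph $A$ and two hub vertices $a,b$ with the arc $ab$. Add all arcs from the sinks of $A$ to $a$ and from $b$ to the sources of $A$. Then every cycle contains $a$ and $b$, so $S_v\subseteq\{a,b\}$ works for every $v$. The failure is in the degrees. Because $D-\{a,b\}$ is acyclic, the sources of $A$ have in-degree at most $2$, and $a,b$ themselves have small in- or out-degree.

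To repair this I would build the digraphs $D_r$ by induction on $r$, replacing the single hub by a gadget. Assume $D_{r-1}$ has been constructed with $\delta^\pm\ge r-1$ and $c\le 2$, and that it has a distinguished vertex pair $(a,b)$ with the property that every cycle of $D_{r-1}$ meeting certain designated "port" vertices passes through $a$ or $b$. To raise the degree by one, take many copies of $D_{r-1}$ arranged along a long linear (acyclic) order. Give each vertex one extra out-arc into a later copy and one extra in-arc from an earlier copy, and close the construction up by backward arcs. These backward arcs all enter a single new pair of hub vertices, and from the hubs the cycles continue into the first copy. Then every cycle either lies inside one copy, where the induction hypothesis supplies $S_v$, or uses the new backward arcs, in which case it passes through the new hub pair, which serves as $S_v$. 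The degrees of the hub vertices must be raised in the same way, by letting them also receive arcs from and send arcs to many copies, and checking that this does not create three openly disjoint cycles through a hub.

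The step I expect to be the main obstacle is the case of a vertex $v$ that lies on both kinds of cycles at once: cycles inside its own copy and cycles through the new hubs. The two transversals (the inductive one and the hub pair) would then combine to a set of size up to $4$ rather than $2$. To handle this I would choose the extra arcs so that a vertex whose copy-internal cycles need the inductive transversal has all of its "long" cycles also passing through one of those same transversal vertices. In other words, the new arcs would be attached only at the port vertices that are already guaranteed to be separated by the old pair $(a,b)$. Verifying that such port sets can be chosen large enough to raise every vertex's in- and out-degree by one, while preserving the invariant for the next induction step, is where I expect the real work.
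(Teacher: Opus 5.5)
The paper does not prove this statement; it quotes it from Thomassen. Your proposal therefore has to stand on its own, and it is not yet a proof: the decisive step is left open, and the architecture you describe provably cannot keep $c\le 2$. Your Menger reformulation is fine; it is Corollary~\ref{cor:sep}.

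The problem is the inductive step. Suppose every vertex $u$ of a copy of $D_{r-1}$ gets a new in-arc from an earlier copy and a new out-arc into a later copy, and the last and first copies are closed up through the hubs. Follow new out-arcs forward from $u$ until the hubs, and new in-arcs backward from $u$ until the hubs. Joining the two traces through the hubs gives a cycle through $u$ that meets $u$'s own copy only in $u$. Such a cycle is openly disjoint from every cycle of that copy through $u$, so $c_{D_r}(u)\ge c_{D_{r-1}}(u)+1$ for every copy vertex $u$.

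By Thomassen's theorem quoted in the introduction ($\delta^+\ge 2$ forces $c\ge 2$), $D_{r-1}$ contains a vertex $u$ with $c_{D_{r-1}}(u)=2$ as soon as $r\ge 3$. Hence $c(D_r)\ge 3$. So the ``transversal of size up to $4$'' is not a looseness in your analysis. It is a genuine failure of the construction, whatever the hubs look like.

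Separately, $c\le 2$ at a hub is not a routine check. A hub that feeds many first-copy vertices and is fed by many last-copy vertices lies on three openly disjoint cycles unless some two vertices separate its out-neighbours from its in-neighbours. Nothing in your one-extra-arc-per-vertex scheme provides that.

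Your proposed repair does not resolve this, for three reasons.
\begin{itemize}
\item If new arcs are attached only at ports, the non-ports keep degree $r-1$.
\item Not every vertex can be a port. If every cycle of $D_{r-1}$ passes through $a$ or $b$, then $D_{r-1}-\{a,b\}$ is acyclic, and any source of it has in-degree at most $2$ in $D_{r-1}$. This contradicts $\delta^-(D_{r-1})\ge r-1$ once $r\ge 4$.
\item Even at a port $u\notin\{a,b\}$, the long cycle formed by $u$'s own two new arcs avoids $a$ and $b$, so the old pair absorbs nothing. Thus $u$ still gains a third cycle whenever it already has two internal ones, one through $a$ and one through $b$.
\end{itemize}
What is really required is that, for every vertex whose degree is raised, every new cycle through it is forced through the \emph{same} two vertices that already meet all its old cycles. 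An arrangement in which copies feed each other along a line and are closed up outside them can never achieve this. You need a genuinely different explicit construction, with a complete verification.

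One simplification is available. The one-way join from the proof of Remark~\ref{rem:outindtw} takes $F$ and its reverse and adds all arcs from the reverse to $F$; this creates no new cycles. So it suffices to build $F$ with $\delta^+(F)\ge r$ and $c(F)\le 2$, and the in-degree condition comes for free.
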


While this destroys any hope of proving the Caccetta-Häggkvist conjecture by lower-bounding $c(D)$, the digraphs constructed by Thomassen to prove Theorem~\ref{thm:thomassenconstruction} are highly irregular. As in many directed graph problems regular digraphs are significantly better behaved, this led Seymour to pose the natural problem whether it could still be possible to tackle the Behzad-Chartrand-Wall-conjecture via a corresponding lower bound on $c(D)$.
\begin{problem}[Seymour~2005, cf.~\cite{MR2588727,MR2672214}]\label{prob:seym}
Does every $r$-regular digraph $D$ satisfy $c(D)\ge r$?
\end{problem}

Mader studied this problem extensively in~\cite{MR2588727,MR2672214}. Following his notation and to make the presentation in the following easier, we will from now on denote by $c_r$ the minimum of $c(D)$ taken over all $r$-regular digraphs $D$. Observe that trivially, we always have $c(D)\le r$. Hence, Seymour's question rephrases as asking whether $c_r=r$ for every $r$.

Interestingly, Mader managed to prove that Problem~\ref{prob:seym} has a positive answer when $r=3$, distinguishing regular digraphs from the negative result for general digraphs in Theorem~\ref{thm:thomassenconstruction}. Unfortunately however, Mader also constructed, for every $r\ge 8$, an $r$-regular digraph $D$ with $c(D)\le r-1$, showing that $c_r\le r-1$ for every $r\ge 8$, and hence the answer to Problem~\ref{prob:seym} is negative for $r\ge 8$. It remains open in all the cases $r\in \{4,\ldots,7\}$. On the positive side, Mader~\cite{MR2588727,MR2672214} proved that Seymour's problem has a positive answer if one additionally assumes the digraph at hand to have strong vertex-connectivity at least $r-2$ or assumes it to be vertex-transitive. He also showed that $c_r\ge 4$ for every $r\ge 7$, but could not prove any growing lower bound on $c_r$ or that $c_r\ge 5$ for any $r$. Mader did however conjecture~\cite{MR2588727} that $c_r$ should grow arbitrarily large as $r\rightarrow \infty$ and reiterated this conjecture in~\cite{MR2672214}. 

\begin{conjecture}[Mader~2008,~\cite{MR2588727}]
For every $k\in\mathbb{N}$ there exists some $r_0\in\mathbb{N}$ such that $c_r\ge k$ for every $r\ge r_0$. In other words, $\lim_{r\rightarrow\infty}c_r=\infty$.
\end{conjecture}
As the first main result of this paper, we confirm this conjecture in a strong form: We show that $c_r$ in fact grows linearly in $r$, which is asymptotically best possible.
\begin{theorem}\label{thm:main}
Let $r\in\mathbb{N}$. Every $r$-regular digraph $D$ satisfies $c(D)\ge \left\lceil\frac{3}{22} r\right\rceil$. 
\end{theorem}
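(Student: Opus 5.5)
We reduce the problem to a statement about vertex-disjoint paths. By Menger's theorem, $c(D)\ge k$ is equivalent to the existence of a vertex $v$ with no separator of size less than $k$ between $N^+(v)$ and $N^-(v)$ in $D-v$. More precisely, for every set $X\subseteq V(D)\setminus\{v\}$ with $|X|<k$, there must remain a directed path in $D-v-X$ from an out-neighbour of $v$ to an in-neighbour of $v$. Arcs $v\to u$ with $u\in N^-(v)$ already give 2-cycles, and such vertices simply have to lie in the separator. So we argue by contradiction. Suppose that every vertex $v$ has a set $X_v$ of size $|X_v|<k:=\lceil\frac{3}{22}r\rceil$ separating $N^+(v)\setminus X_v$ from $N^-(v)\setminus X_v$ in $D-v$. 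Let $A_v$ denote the set of vertices reachable from $N^+(v)\setminus X_v$ in $D-v-X_v$. Then $A_v$ contains at least $r-k$ out-neighbours of $v$ and misses at least $r-k$ in-neighbours. The only arcs leaving $A_v$ go into $X_v\cup\{v\}$. Hence the out-arcs of $A_v$ number at most $|A_v|\cdot 0+$ (arcs into $X_v\cup\{v\}$), which is at most $(k-1)r+|A_v\cap N^-(v)|$.

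The main tool is regularity. In an $r$-regular digraph, for every vertex set $S$ the number of arcs leaving $S$ equals the number of arcs entering $S$. So $A_v$ is a set whose out-boundary is small, at most about $kr$ arcs, and therefore its in-boundary is also small. But $v$ itself sends at least $r-k$ arcs into $A_v$. I would then choose $v$ extremally, taking $v$ so that $|A_v|$ is minimal, and examine a vertex $w\in N^+(v)\cap A_v$ together with its own set $A_w$. The aim is a submodularity or uncrossing argument. Consider the sets $A_v\cap A_w$ and $A_v\cup A_w$. Their arc boundaries are controlled by those of $A_v$ and $A_w$ via $d^+(A\cap B)+d^+(A\cup B)\le d^+(A)+d^+(B)$. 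The intersection should yield a smaller valid separation set, which would contradict minimality.

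For this to work, the intersection must still contain many out-neighbours of some vertex while avoiding many of its in-neighbours. Here the degree counts come in. One tallies the arcs from $v$ into $A_v$ ($\ge r-k$), the arcs from $A_v$ back to $v$ (at most $|N^-(v)\cap A_v|$, which is small because most in-neighbours are excluded), and the contributions of $X_v$ and $X_w$ ($\le 2kr$). Balancing these inequalities should give $r-k\le c\cdot k$ plus lower order terms. Optimizing the constant is what should produce the ratio $3/22$, so the contradiction arises exactly when $k<\frac{3}{22}r$. Integrality then gives the ceiling.

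I expect the main obstacle to be the uncrossing step. We must show that for a well-chosen pair $v,w$, either the intersection $A_v\cap A_w$ (or a related set) is again a valid "bad" set for some vertex and is strictly smaller, or the boundary counting fails directly. The delicate point is where $v$ lies relative to $A_w$ and where $w$ lies relative to $A_v$. I would first try a case split: either $v\in A_w$ or $v\in X_w$ or $v$ is unreachable. In each case I would bound the arcs between $\{v,w\}$, the two sets, and $X_v\cup X_w$, accepting a loss in the constant at each case. The final constant $3/22$ should come from the worst case.
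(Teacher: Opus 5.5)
You have the right basic ingredients, and the paper uses them too: the separator $X_v$, the set $A_v$, and the fact that in the Eulerian digraph $D$ the out-boundary of $A_v$ (at most $(k-1)r$ arcs, all into $X_v$) equals its in-boundary. The step that is supposed to produce the contradiction is missing, though, and as sketched it cannot work, for two reasons.

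\textbf{The counts have mismatched orders of magnitude.} Your only lower bound is the roughly $r-k$ arcs from $v$ into $A_v$, which is linear in $r$. Every upper bound you have ($(k-1)r$, $2kr$) is of order $kr\approx r^2$. No balancing of these inequalities can give $r-k\le c\cdot k$, so the constant $\frac{3}{22}$ is asserted rather than derived.

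\textbf{The uncrossing step does not connect to your extremal choice.} You minimise over sets of the form $A_u$. But $A_v\cap A_w$ is just some set with out-boundary at most about $2kr$, and a small-boundary set is not in itself a contradiction. To contradict minimality you would have to show that it is (or contains) a set $A_u$ for some vertex $u$. That means showing it contains all unseparated out-neighbours of $u$ and no in-neighbour of $u$, which is exactly the step you leave open.

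\textbf{What is needed.} You need a vertex $v$ whose separation is forced to cut $\Omega(r^2)$ arcs, counted in both directions. Then Eulerian-ness gives $\mathrm{a}_D[A\cup S,B]=2\mathrm{a}_D(S,B)\le 2r|S|$, which bounds $|S|$ from below. The paper gets this from a density argument rather than uncrossing:
\begin{itemize}
\item Take a vertex-minimal subdigraph $D'$ with $\mathrm{a}(D')\ge r\,\mathrm{v}(D')-\beta r^2$ and $\mathrm{v}(D')\ge \gamma r$. Minimality automatically gives more than $\beta r^2$ arcs across every partition of $V(D')$ into two parts of size at least $\gamma r$.
\item The digons of $D$ form a graph of maximum degree at most $c(D)$, so $\mathrm{a}(D')<\mathrm{v}(D')(\mathrm{v}(D')+c(D))/2$. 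This yields $\mathrm{v}(D')\ge \frac{3}{2}r$.
\item Partitioning $V(D')$ into vertices of low out-degree and vertices of low in-degree then yields a vertex $v$ with $d^+_{D'}(v),d^-_{D'}(v)\ge \frac{4}{11}r$.
\item The separation of this $v$ must split $V(D')$ into two parts of size at least $\gamma r$. Hence $\beta r^2<2r|S|$, i.e.\ $|S|\ge \frac{3}{22}r$ with $\beta=\frac{3}{11}$.
\end{itemize}
Your plan has no counterpart to this digon-based density bound or to the edge-connected dense subdigraph. Those are what supply the quadratic lower bound your counting lacks.
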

Theorem~\ref{thm:main} can be seen as affirming Problem~\ref{prob:seym} in a qualitative sense. We also remark that it is known that the Caccetta-H\"{a}ggkvist-conjecture holds up to constant factors due to a classic result of Chv\'{a}tal and Szemer\'{e}di~\cite{MR735200}, namely every digraph with minimum out-degree at least $r$ and girth $g$ has at least $\Omega(rg)$ vertices. Since every digraph $D$ of girth $g$ satisfies $\mathrm{v}(D)\ge c(D)(g-1)+1$, Theorem~\ref{thm:main} offers a short alternative proof of this fact in the case of regular digraphs, albeit with a worse constant than that obtained in~\cite{MR735200}.

Given Theorem~\ref{thm:main}, it is natural to wonder how close to the ideal bound of $r$ the value of $c_r$ is in general. Mader~\cite{MR2588727} studied this question and showed that the difference $r-c_r$ grows at least as fast as $\sqrt{r}$.
\begin{theorem}[Mader 2008,~\cite{MR2588727}]\label{thm:maderconstruction}
For every $k\ge 2$ there exists a $2k^2$-regular digraph $D$ with $c(D)\le 2k^2-k+1$.
\end{theorem}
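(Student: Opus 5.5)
The plan is to reduce the upper bound on $c(D)$ to exhibiting small separators, and then build a regular digraph in which every vertex has one.

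\textbf{Reduction via Menger.} For a vertex $v$, the maximum number of openly disjoint cycles through $v$ equals the maximum number of internally disjoint paths from $N^+(v)$ to $N^-(v)$ in $D-v$. Here a vertex lying in $N^+(v)\cap N^-(v)$ counts as a trivial path, i.e. a digon. By Menger's theorem, this maximum equals the minimum size of a set $S_v\subseteq V(D)\setminus\{v\}$ meeting every directed cycle through $v$. Hence it suffices to construct, for each $k\ge 2$, a $2k^2$-regular digraph $D$ in which every vertex $v$ has such a hitting set $S_v$ with $|S_v|\le 2k^2-k+1$. In words: for every $v$, some $k$ of its out-neighbours can only return to $v$ through a common set of at most one vertex, together with the remaining $2k^2-k$ out-neighbours.

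\textbf{Candidate construction.} I would take a layered, cyclically symmetric construction so that all vertices look alike. The vertex set is partitioned into blocks $B_1,\dots,B_m$, indexed cyclically, with each block of size about $2k$ (e.g.\ $k$ groups of size $2k$ arranged in a ring). The arcs are as follows.
\begin{enumerate}[(i)]
\item Most of each vertex's out-arcs go into the next block.
\item A set of $k$ of each vertex's out-arcs go into a small ``gadget'' $G$ of size about $k$.
\item The gadget is wired so that every path from $G$ back to the earlier blocks is funnelled through a single designated vertex $w$ (or a bounded-size set).
\end{enumerate}
Then for a vertex $v$, I would set
\[
S_v=(N^+(v)\setminus G)\cup\{w\}.
\]
This set has size $(2k^2-k)+1$. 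It hits every cycle through $v$: such a cycle either leaves $v$ through $N^+(v)\setminus G$, or it enters $G$ and must return through $w$. The size $2k^2=k\cdot 2k$ suggests that the blocks and the gadget are arranged as $k$ copies of something with degree $2k$, or as a blow-up of a $2k$-regular pattern by independent sets of size $k$. I would pick parameters so that the counting below closes up exactly.

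\textbf{Verification.} Three checks are needed.
\begin{enumerate}[(a)]
\item Every vertex has out- and in-degree exactly $2k^2$. This is a double-counting check on the arc pattern between blocks and gadget, made easy by the cyclic symmetry.
\item $w\notin N^+(v)$ and $w\neq v$ for every $v$, or, if $v$ is itself in a gadget, a symmetric choice of $S_v$ works. Gadget vertices must have their own small separators, so the gadget should be one of the blocks in a cyclically repeated pattern rather than a special set.
\item Every $v$--$v$ cycle that enters $G$ passes through $w$.
\end{enumerate}

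\textbf{Main obstacle.} The hard step is (c) combined with regularity. Funnelling paths through a single vertex forces in-degree imbalances, since vertices near $w$ want few in-arcs from outside. Compensating arcs must be added so that degrees are exactly $2k^2$ without creating bypasses of $w$. I would first try to make the funnel a vertex $w$ whose in-neighbourhood is exactly the gadget, and absorb the degree excess by making the gadget internally dense (nearly complete, with the arcs inside $G$ supplying the missing degree). Regularity is then restored by blowing everything up by independent sets, which preserves both the degree ratios and the separator structure, with $|S_v|$ scaling to $2k^2-k+1$.
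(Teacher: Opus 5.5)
The paper does not prove this statement; it is quoted from Mader. Your proposal therefore has to stand on its own, and as it stands it is a plan rather than a proof.

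The reduction is fine. You only need its easy direction: openly disjoint cycles through $v$ meet any hitting set $S_v$ in distinct vertices. But the whole content of the theorem is an explicit $2k^2$-regular digraph in which \emph{every} vertex has a hitting set of size at most $2k^2-k+1$, and you never write one down. The blocks, the gadgets, the funnel vertex $w$ and the arc pattern are all left at the level of ``about'' and ``I would pick parameters'', and checks (a)--(c) are never carried out. The step you flag as the main obstacle is exactly what the theorem requires: funnelling through one vertex while keeping every in- and out-degree equal to $2k^2$, simultaneously for all $v$, including the gadget and funnel vertices themselves. ``Make it cyclically symmetric'' does not supply this.

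Taken literally, the sketch is not even consistent. In (i), a vertex cannot send its $2k^2-k$ remaining out-arcs into a next block of size about $2k$. In (ii), a single gadget with $|G|=k$ absorbs only $2k^3$ arcs, so $k(\mathrm{v}(D)-k)\le 2k^3$, i.e.\ $\mathrm{v}(D)\le 2k^2+k$. Then every $v\notin G$ has at least $4k^2+1-\mathrm{v}(D)\ge 2k^2-k+1$ digon partners. All of these lie in $N^+(v)$ and must lie in $S_v$, which is more than the $2k^2-k$ vertices of $N^+(v)\setminus G$ that your $S_v$ provides (given $w\notin N^+(v)$).

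Your regularisation step is also wrong. Replacing every vertex by an independent set of size $b$ multiplies all in- and out-degrees by $b$, so it cannot make an irregular digraph regular. It also multiplies $c$ exactly by $b$. The proof of Proposition~\ref{prop:upper} gives $c(D')\le b\,c(D)$. Conversely, each cycle $vx_1\cdots x_mv$ lifts to the $b$ openly disjoint cycles $(v,1)(x_1,i)\cdots(x_m,i)(v,1)$, $i\in[b]$, so $c(D')=b\,c(D)$. Hence a $k$-fold blow-up of a $2k$-regular pattern has $c\le 2k^2-k+1$ only if the pattern already has $c\le 2k-1$. That would give the stronger bound $2k^2-k$, and for $k=2,3$ it would settle the open cases $r=4,6$ of Seymour's problem.

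The shape $2k^2-k+1=(2k-1)k+1$ instead suggests a separator made of $2k-1$ blown-up classes plus one \emph{unblown} vertex $w$, i.e.\ a non-uniform blow-up. But then you must solve $\sum_{x\in N^+(u)}b_x=\sum_{x\in N^-(u)}b_x=2k^2$ for every $u$ while keeping every funnel intact, and that is precisely the construction that is missing.
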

This implies that $c_r\le r-\left(\frac{1}{\sqrt{2}}-o(1)\right)\sqrt{r}$ for every $r$. Given this bound, one may still hope that $c_r=r-o(r)$. Unfortunately, this is not the case: Using blow-ups, we demonstrate that $r-c_r$ grows linearly in $r$, hence improving the aforementioned bound of Mader. Concretely, we first show the following fact about the numbers $c_r$, which then directly implies a linear separation of $c_r$ from $r$ using that $c_8\le 7$.
\begin{proposition}\label{prop:upper}
For every $r, b\in \mathbb{N}$, we have $c_{rb}\le c_r\cdot b$.
\end{proposition}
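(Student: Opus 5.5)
Take an $r$-regular digraph $D$ with $c(D)=c_r$ and form its \emph{blow-up} $D^{(b)}$: replace each vertex $v$ of $D$ by an independent set $X_v=\{(v,1),\ldots,(v,b)\}$ of $b$ copies, and for every arc $uv$ of $D$ add all $b^2$ arcs from $X_u$ to $X_v$. Each copy $(v,i)$ then has out- and in-degree exactly $rb$, so $D^{(b)}$ is $rb$-regular. It therefore suffices to show $c(D^{(b)})\le c(D)\cdot b$. (If $D$ is only allowed to be a simple digraph, with no loops or digons, the blow-up stays simple, so the construction is admissible.)

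Fix a vertex $x=(v,i)$ of $D^{(b)}$ and openly disjoint cycles $C_1,\ldots,C_k$ through $x$. Let $\pi$ denote the projection $(u,j)\mapsto u$. Each $C_j$ projects to a closed directed walk $\pi(C_j)$ in $D$ through $v$. Take the first return of this walk to $v$ after leaving it; this gives a directed cycle $Z_j$ in $D$ through $v$. Its other vertices lie in the projection of $C_j - x$, although copies of $v$ other than $x$ may occur on $C_j$.

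Now bound $k$. Suppose the cycles $Z_1,\ldots,Z_k$ admit no subfamily of more than $c(D)$ members that are openly disjoint at $v$. We want to conclude $k\le c(D)\,b$. Because the sets $V(C_j)\setminus\{x\}$ are pairwise disjoint, each vertex $u\ne v$ of $D$ lies in $Z_j-v$ for at most $b$ indices $j$. So the family $\{Z_j-v\}$ is a collection of paths from $N^+(v)$ to $N^-(v)$ in $D-v$ in which every vertex has load at most $b$.

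By Menger's theorem, the maximum number of openly disjoint cycles through $v$ equals the minimum size of a vertex set $S\subseteq V(D)\setminus\{v\}$ separating $N^+(v)$ from $N^-(v)$ in $D-v$, with the convention that an arc from $v$ into $N^-(v)$ counts as a digon; such digon cycles share no internal vertices and are handled separately as cycles of length $2$. Hence there is such an $S$ with $|S|\le c(D)$ after removing digons. Every cycle $Z_j$ that is not a digon hits $S$, and each vertex of $S$ is hit by at most $b$ of the $Z_j$.

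For the digon cycles $v\to u\to v$, the cycles $C_j$ must use distinct copies of $u$, so each $u$ contributes at most $b$ of them, and each such $u$ also counts towards $c(D)$ as its own cycle. Adding everything up gives $k\le b\cdot c(D)$.

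The main obstacle is the step where $C_j$ passes through copies $(v,i')$ of $v$ with $i'\ne i$. Truncating at the first return to $v$ handles this, but it uses only part of $C_j$, and that part is still disjoint from the other $C_{j'}$, so the load bound survives. One must check that the first-return segment is a genuine cycle in $D$. It is: the projection of a path visiting distinct copies can repeat vertices, so first extract a directed $v$-to-$v$ path from the walk; its vertices remain inside $\pi(C_j-x)$ together with $v$, which is all the load argument needs. Taking $D$ with $c(D)=c_r$ then yields $c_{rb}\le c(D^{(b)})\le c_r\, b$.
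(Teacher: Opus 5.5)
Your proof is correct and is essentially the paper's argument: blow up an extremal $D$, project each cycle through $(v,i)$ to an $N^+_D(v)$--$N^-_D(v)$ route in $D$, and use a Menger separator $S$ with $|S|\le c(D)$, whose $b|S|$ copies can meet at most $b|S|$ of the pairwise disjoint sets $V(C_j)\setminus\{(v,i)\}$. The paper phrases this as pigeonhole on $S\times[b]$ via Corollary~\ref{cor:sep}, and because its separator lives in $D$ rather than $D-v$, the projected walk may pass through $v$, so no first-return truncation is needed. Your separate digon bookkeeping is unnecessary and slightly off as stated: a non-digon $Z_j$ can pass through a vertex of $N^+(v)\cap N^-(v)$, so it need not meet a separator chosen ``after removing digons''. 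The set version of Menger already treats such a vertex as a one-vertex path, so it lies in $S$, every $Z_j$ meets $S$, and $k\le b|S|\le b\,c(D)$ follows directly.
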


\begin{corollary}\label{cor:upper}
    For every $r\in \mathbb{N}$, we have $c_r\le 7\left\lceil \frac{r}{8}\right\rceil$.
\end{corollary}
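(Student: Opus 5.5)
The plan is to combine Proposition~\ref{prop:upper} with Mader's bound $c_8\le 7$ (his construction of an $r$-regular digraph $D$ with $c(D)\le r-1$ for every $r\ge 8$, recalled above). Then we pass from degrees that are multiples of $8$ to arbitrary $r$ using a monotonicity property of $c_r$ in $r$.

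\textbf{Step 1 (multiples of $8$).} Fix $r\in\mathbb{N}$ and set $b:=\lceil r/8\rceil$, so that $8b\ge r$. Applying Proposition~\ref{prop:upper} with degree $8$ and blow-up factor $b$ gives
\[
c_{8b}\le c_8\cdot b\le 7b=7\left\lceil \tfrac{r}{8}\right\rceil .
\]
Let $D$ be an $8b$-regular digraph attaining $c(D)=c_{8b}\le 7b$.

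\textbf{Step 2 (monotonicity: $c_r\le c_s$ for $r\le s$).} We show that $D$ contains a spanning $r$-regular subdigraph. Consider the bipartite graph $B$ with two copies $V^+$ and $V^-$ of $V(D)$, and an edge $u^+w^-$ for each arc $(u,w)$ of $D$. Since $D$ is $8b$-regular, $B$ is an $8b$-regular bipartite graph. By K\H{o}nig's edge-colouring theorem (equivalently, repeated use of Hall's theorem), $B$ decomposes into $8b$ perfect matchings. Each perfect matching corresponds to a spanning subdigraph of $D$ in which every vertex has out- and in-degree exactly $1$.

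Take the union of any $r$ of these perfect matchings. The corresponding arcs form a spanning subdigraph $D'\subseteq D$ that is $r$-regular. It is loopless and has no parallel arcs, because $D$ has none.

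\textbf{Step 3 (conclusion).} Every collection of openly disjoint cycles through a vertex in $D'$ is also such a collection in $D$, since directed cycles of a subdigraph are directed cycles of $D$. Hence
\[
c_r\le c(D')\le c(D)\le 7\left\lceil \tfrac{r}{8}\right\rceil .
\]

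The only step that requires real care is Step~2, namely producing an $r$-regular spanning subdigraph of an $s$-regular digraph. Deleting arcs naively could destroy regularity, and the perfect-matching decomposition of the associated regular bipartite graph is exactly what avoids this. Everything else is a direct combination of Proposition~\ref{prop:upper} with Mader's example.
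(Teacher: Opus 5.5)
Your proposal is correct and follows the same route as the paper's proof. You combine $c_8\le 7$ with Proposition~\ref{prop:upper} to get $c_{8b}\le 7b$, then use monotonicity to conclude $c_r\le c_{8b}$. The only difference is that the paper simply cites monotonicity as an easy observation of Mader, whereas you prove it by decomposing the bipartite double cover into perfect matchings (K\H{o}nig's theorem) to extract a spanning $r$-regular subdigraph; that argument is valid.
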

\begin{proof}
As mentioned before (and follows also from Theorem~\ref{thm:maderconstruction} for $k=2$), we have $c_8\le 7$. Hence, Proposition~\ref{prop:upper} implies that $c_{8b}\le 7b$ for every $b\in \mathbb{N}$. Since it is furthermore easy to see (and was observed by Mader~\cite{MR2588727}) that $c_{i}\le c_j$ for every $i\le j$, this implies that for every $r\in \mathbb{N}$ we have
$$c_r\le c_{8\left\lceil\frac{r}{8}\right\rceil}\le 7\left\lceil \frac{r}{8}\right\rceil,$$ as desired.
\end{proof}

Interestingly, the technique we use for our proof of Theorem~\ref{thm:main} (described in more detail at the end of this section) can also be used to prove a new result about a seemingly unrelated topic, namely \emph{directed tree-width}. The latter is the natural extension of the famous and versatile parameter \emph{tree-width} for undirected graphs and was introduced by Johnson, Robertson, Seymour and Thomas~\cite{MR1828440} in 2001. Given a digraph $D$, we denote by $\mathrm{dtw}(D)\in \mathbb{N}_0$ its directed tree-width. Intuitively, directed-tree-width is a measure of the structural similarity of a digraph $D$ to an acyclic digraph (the latter are exactly those digraphs for which $\mathrm{dtw}(D)=0$). Since its introduction, directed tree-width has found many applications in the structural theory of directed graphs as well as for algorithmic problems such as parameterized algorithms as well as routing and linkage problems on digraphs. 

For undirected graphs, it is well-known that every graph of minimum degree at least $r$ has tree-width at least $\Omega(r)$. In fact, this follows directly from the following bound on the number of edges in graphs with given tree-width.
\begin{lemma}[Folklore]\label{lem:twedgebound}
Every graph with $n$ vertices and tree-width $t$ has at most $$t(n-t)+\binom{t}{2}\le tn$$ edges.
\end{lemma}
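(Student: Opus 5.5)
We prove the bound by induction on $n$, using the standard fact that a graph of tree-width $t$ with $n\ge t+1$ vertices has a vertex of degree at most $t$. To set up the induction, note that removing a vertex does not increase tree-width. It is then convenient to prove the slightly more flexible statement that every graph $G$ with $n\ge t$ vertices and tree-width at most $t$ has at most $t(n-t)+\binom{t}{2}$ edges.

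The base case is $n=t$ (or $n\le t+1$ more generally). Here the bound $\binom{n}{2}$ is trivial. If $n=t$, the expression equals $\binom{t}{2}$. If $n=t+1$, it equals $t+\binom{t}{2}=\binom{t+1}{2}$. So these cases hold automatically.

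For the inductive step with $n\ge t+2$, take a tree decomposition $(T,(B_x)_{x\in V(T)})$ of width at most $t$. We may assume it is \emph{non-redundant}, meaning no bag is contained in a neighbouring bag; we achieve this by contracting tree edges $xy$ with $B_x\subseteq B_y$.

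If $T$ has a single node, then $n\le t+1$, which is already covered. Otherwise, let $x$ be a leaf of $T$ with neighbour $y$. By non-redundancy there is a vertex $v\in B_x\setminus B_y$. By the connectivity property of tree decompositions, $v$ lies in no bag other than $B_x$. Hence every neighbour of $v$ shares a bag with $v$, and that bag must be $B_x$. This gives $\deg(v)\le |B_x|-1\le t$.

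Deleting $v$ leaves a graph on $n-1\ge t$ vertices with tree-width at most $t$. By induction it has at most $t(n-1-t)+\binom{t}{2}$ edges. Adding back the at most $t$ edges at $v$ gives the claimed bound $t(n-t)+\binom{t}{2}$.

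Finally, the inequality $t(n-t)+\binom{t}{2}\le tn$ holds because $\binom{t}{2}\le t^2$. If $n<t$ the statement is vacuous or trivial, since tree-width is at most $n-1$.

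The only slightly delicate point is the existence of the low-degree vertex, which rests on the non-redundancy reduction. Everything else is routine.
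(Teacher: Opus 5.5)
Your proof is correct. The paper gives no proof of this lemma at all; it is quoted as folklore. Your argument is the standard one. In a non-redundant tree decomposition of width at most $t$ with at least two nodes, a leaf bag contains a vertex that lies in no other bag, so graphs of tree-width at most $t$ are $t$-degenerate. Peeling such vertices off until $t+1$ vertices remain, which span at most $\binom{t+1}{2}$ edges, gives exactly $t(n-t)+\binom{t}{2}$.

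The other common route is to complete $G$ to a $t$-tree, which has exactly $tn-\binom{t+1}{2}=t(n-t)+\binom{t}{2}$ edges. It gives the same bound and also shows the bound is attained, but it relies on the chordal-completion characterization of tree-width. Your degeneracy argument is self-contained.

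Your choice to prove the version for tree-width \emph{at most} $t$ is the form actually used in Observation~\ref{obs:densebound}, where only $\mathrm{tw}(F)\le \mathrm{dtw}(D)$ is known. The exact-$t$ statement would also suffice there via the weaker bound $tn$.
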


It is natural to suspect an analogous statement to be true for directed graphs, i.e., that every digraph $D$ with minimum out- (and/or) in-degree at least $r$ has directed tree-width at least $\Omega(r)$. A statement of this type would have some desirable consequences (as we discuss further below). Unfortunately, this turns out to be (very) false.

\begin{remark}\label{rem:outindtw}
For every $r\in \mathbb{N}$ there exists a digraph $D$ with $\delta^+(D), \delta^-(D)\ge r$ and $\mathrm{dtw}(D)=1$. 
\end{remark}
\begin{proof}
Let $r\in \mathbb{N}$ be given. It was shown by the author in~\cite[Remark~2.1]{MR4434351} that there exists a digraph $F_r$ with $\delta^+(F_r)=r$ and $\mathrm{dtw}(F_r)=1$. 
Let $\revvec{F}_r$ denote the digraph obtained from $F_r$ by reversing the directions of all arcs. It is easy to verify (cf.~\cite{MR1828440}) that the directed-tree width remains unaffected by the operation of reversing all arcs, and hence we also have $\mathrm{dtw}\left(\revvec{F}_r\right)=1$. We further have $\delta^-\left(\revvec{F}_r\right)=\delta^+(F_r)=r$. Let now $D$ be defined as the digraph obtained from the disjoint union of $F_r$ and $\revvec{F}_r$ by adding all possible arcs from $\revvec{F}_r$ to $F_r$, i.e., all arcs in the set $V\left(\revvec{F}_r\right)\times V(F_r)$. It is easy to verify that we now have $\delta^+(D),\delta^-(D)\ge r$. Furthermore, it follows by definition that the strongly connected components of $D$ are exactly those of $F_r$ together with those of $\revvec{F}_r$. Since it is well-known and easy to verify (cf.~\cite{MR1828440}) that the directed tree-width of a digraph $D$ equals the maximum of the directed tree-width of its strongly connected components, it follows that $\mathrm{dtw}(D)=\max\left\{\mathrm{dtw}(F_r),\mathrm{dtw}\left(\revvec{F}_r\right)\right\}=\max\{1,1\}=1$. This concludes the proof of the remark.
\end{proof}

It can be checked that the construction of the digraphs in Remark~\ref{rem:outindtw} is highly irregular, and hence it is natural to hope that a lower-bound on the directed tree-width in terms of degrees can be recovered in the case of regular digraphs. As the second main result of this paper, we prove that this is indeed the case.

\begin{theorem}\label{thm:main2}
Let $r\in \mathbb{N}$. Every $r$-regular digraph has directed tree-width at least $\left\lfloor\frac{r}{20}\right\rfloor=\Omega(r)$.
\end{theorem}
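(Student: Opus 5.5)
The plan is to argue by contradiction via balanced separators. Set $k=\lfloor r/20\rfloor$ and suppose $D$ is $r$-regular with $\mathrm{dtw}(D)<k$. Two standard facts from~\cite{MR1828440} will be used.
\begin{enumerate}[(i)]
\item A digraph of directed tree-width less than $k$ admits, for every vertex set $W$, a guard set $S$ with $|S|\le k$ such that every strong component of $D-S$ contains at most $|W|/2$ vertices of $W$. This comes from taking a suitable bag of an arborescence decomposition, together with its guard set, exactly as in the proof that havens and brambles bound directed tree-width.
\item Directed tree-width is monotone under taking subdigraphs.
\end{enumerate}
In the regular setting we also use that every weakly connected component of a regular digraph is strongly connected, since $D$ is Eulerian. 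Hence we may assume $D$ is strongly connected.

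\textbf{Step 1 (small cuts).} Apply (i) with a suitable $W$; first try $W=V(D)$, and later $W=$ a dense piece found below. Topologically order the strong components of $D-S$ and split them into a prefix $A$ and a suffix $B$ with no arcs from $B$ to $A$ and both $|A\cap W|$ and $|B\cap W|$ at least roughly $|W|/4$. Double counting the arcs at $A$ using regularity gives
$$r|A| = e(A)+e(A,B)+e(A,S) \quad\text{and}\quad r|A| = e(A)+e(S,A).$$
Hence $e(A,B)\le e(S,A)\le r|S|\le rk$. The same holds for every cut between consecutive blocks of the ordering. So $D-S$ is nearly Eulerian: every vertex outside $S$ loses at most the arcs to and from $S$, and every "topological cut" carries at most $rk$ arcs.

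\textbf{Step 2 (a dense, well-connected piece).} Consider the source component $C_1$ of $D-S$ (or, more generally, the first block in the order). Every in-arc of a vertex of $C_1$ comes from $C_1$ or from $S$, so
$$r|C_1|\le e(C_1)+r|S| .$$
Since $e(C_1)\le |C_1|(|C_1|-1)$, this forces $|C_1|\ge r-|S|$. More importantly, deleting the $O(rk)$ cut arcs and the set $S$ leaves a subdigraph $H$ in which all but few vertices keep in- and out-degree at least $r-O(k)$. Repeatedly peeling off vertices of degree below $r/2$ (each deletion destroys at most $r$ arcs, and there are only $O(rk)$ degree deficits to start with), we obtain a nonempty subdigraph $H'$ with $\delta^+(H'),\delta^-(H')\ge r/2-O(k)$. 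The key point is that $H'$ is still \emph{almost balanced}: the sum of $|d^+(v)-d^-(v)|$ over $v\in V(H')$ is $O(rk)$, a constant-factor fraction of $r$ times $k$.

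\textbf{Step 3 (lower bound in the almost-balanced setting).} Prove the stronger, inductive statement: if a digraph $H$ has $\delta^+,\delta^-\ge \alpha r$ and total imbalance $\sum_v|d^+(v)-d^-(v)|\le \beta r^2$ for suitable small constants $\alpha,\beta$, then it has a haven, equivalently a bramble, of order at least $k$. Apply Step 1 to $H$ instead of $D$. A guard set $S$ of size less than $k$ and the resulting two-sided split yield a source block $A$ with few incoming arcs. Regularity is replaced by the imbalance budget: the identity of Step 1 becomes
$$e(A,B)\le e(S,A)+\sum_{v\in A}|d^+(v)-d^-(v)|.$$
Then $A$ must have minimum degrees close to $\alpha r$ yet very few arcs entering it. This lets us pass to a smaller sub-instance with the same parameters, with only a controlled loss in the imbalance budget; eventually we reach an instance too small to support the degree condition, a contradiction. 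With the constants tracked, $k=\lfloor r/20\rfloor$ should come out.

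\textbf{Main obstacle.} The main obstacle is Step 3, specifically controlling the imbalance budget across iterations: each round adds up to $r|S|$ new imbalance, and we need the recursion to terminate before the budget exceeds $\beta r^2$. I would first try to avoid iterating at all. Take $W$ to be a set of about $r$ vertices in a minimal-size source block, and use the fact that (i) applied to this $W$ already splits a nearly complete piece of size $\Theta(r)$. In a nearly complete block, separating it into two sides each containing $|W|/4$ vertices of $W$ with fewer than $k$ guards is impossible, because between two sets of size $\Theta(r)$ with almost all arcs present there are $\Theta(r^2)$ arcs in both directions, while $|S|<k=\lfloor r/20\rfloor$ vertices can meet only $O(kr)$ of them.
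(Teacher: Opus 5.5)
\textbf{There is a genuine gap: nothing in Steps 2--3 supplies a lower bound on the arcs crossing the separator, so no contradiction is reached.}

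Your Step 1 is correct and is essentially the paper's endgame. Small directed tree-width gives a separator $S$ with $|S|\le k$; this is the contrapositive of Lemma~\ref{lem:linked}. You split the strong components of $D-S$ into a prefix $A$ and a suffix $B$. The Eulerian count then gives $e(A\cup S,B)=e(B,A\cup S)=e(B,S)\le r|S|$, so at most $2rk$ arcs cross in either direction.

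What is missing is the matching lower bound. You need a set $W$ such that every split of $W$ into two parts of size $\Omega(r)$ is crossed by more than $2rk$ arcs. $W=V(D)$ cannot serve: a ring of nearly complete blobs, joined by single rerouted arcs, is $r$-regular and has balanced cuts crossed by two arcs. Steps 2--3 do not produce such a $W$ either.
\begin{itemize}
\item \emph{The iteration does not close.} Each round on source blocks adds a fresh guard set of size up to $k$. Shrinking a block from size $n$ to $O(r)$ takes about $\log_2(n/r)$ rounds, which is unbounded since $n$ is arbitrary relative to $r$. So the budget you flag really does blow up; it is not a matter of tracking constants.
\item \emph{The shortcut assumes what it needs.} It presupposes a nearly complete block of size $\Theta(r)$, and nothing in the argument provides one. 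You only know $|C_1|\ge r-|S|+1$. Even that needs a per-vertex count: the aggregate inequality $r|C_1|\le e(C_1)+r|S|$ is consistent with $|C_1|=1$.
\item \emph{The peeling step is unsupported.} Removing a vertex with out-degree below $r/2$ but in-degree near $r$ deletes almost $3r/2$ arcs. The deficit $r\,\mathrm{v}(H)-\mathrm{a}(H)$ can therefore rise by almost $r/2$, and nothing prevents peeling from exhausting the graph.
\end{itemize}
The inductive statement in Step 3 is left unproved.

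The missing idea is a one-shot density/minimality argument. Call $H$ $(r,\beta,\gamma)$-dense if $\mathrm{v}(H)\ge\gamma r$ and $\mathrm{a}(H)\ge r\,\mathrm{v}(H)-\beta r^2$. $D$ itself is dense. Take a vertex-minimal dense subdigraph $D'$. Every partition of $V(D')$ into parts of size at least $\gamma r$ is crossed by more than $\beta r^2$ arcs, since otherwise one part would be a smaller dense subdigraph (Lemma~\ref{lem:edgeconnectivity}).

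You also need $\mathrm{v}(D')$ large, so that after removing $S$ both sides still contain $\gamma r$ vertices of $L=V(D')$. Small directed tree-width bounds the number of digons, via the undirected tree-width edge bound. Hence a dense digraph on few vertices is impossible, giving $\mathrm{v}(D')>1.78r$ for $\alpha=\beta=0.1$, $\gamma=0.3$ (Observation~\ref{obs:densebound} and the first part of Lemma~\ref{lem:highoutandin}).

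Now take $W=L$ in your Step 1. Your balanced prefix split leaves at least $|L|/4-|S|/2>0.3r$ vertices of $L$ on each side. With $X'=(A\cup S)\cap L$ and $Y'=B\cap L$ this gives $0.1r^2<\mathrm{a}_{D'}[X',Y']\le 2r|S|\le r^2/10$, a contradiction.

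The paper instead cuts along the set reachable from a vertex of large in- and out-degree in $D'-S$. That is why it also needs the second part of Lemma~\ref{lem:highoutandin}; your prefix split would make that part unnecessary.
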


The bound in Theorem~\ref{thm:main2} is optimal up to the constant factor, since the complete digraph $\bivec{K}_{r+1}$ on $r+1$ vertices and with all possible $r(r+1)$ arcs is $r$-regular and has directed tree-width~$r$. 

Theorem~\ref{thm:main2} has a pleasing consequence concerning subdivisions in digraphs, which we shall describe next. To do so, we need to recall the \emph{directed grid theorem} due to Kawarabayashi and Kreutzer~\cite{MR3388245}, one of the premier results of structural digraph theory, which extends Robertson and Seymour's famous \emph{grid minor theorem}~\cite{MR854606} to directed graphs. To state the result, we first define the so-called \emph{cylindrical walls}.

\begin{definition}[Cylindrical wall, cf.~\cite{MR4141197}]
The \emph{cylindrical wall of order $k$} for $k \in \mathbb{N}$ is the planar digraph $W_k$ with vertex-set $V(W_k)=[2k]\times [2k]$ and in which there is an arc from a vertex $(x,y)$ to another vertex $(x',y')$ if and only if one of the following holds (see Figure~\ref{fig:wall} for an illustration):
\begin{itemize}
    \item $y=y'$ is odd and $x'=x+1$.
    \item $y=y'$ is even and $x'=x-1$. 
    \item $x=x'$ is odd, $y$ is even and $y'\equiv y+1 \text{ (mod }2k)$.
    \item $x=x'$ is even, $y$ is odd and $y'\equiv y+1 \text{ (mod }2k)$.
\end{itemize}
\end{definition}

\begin{figure}
    \centering
\includegraphics[scale=0.7]{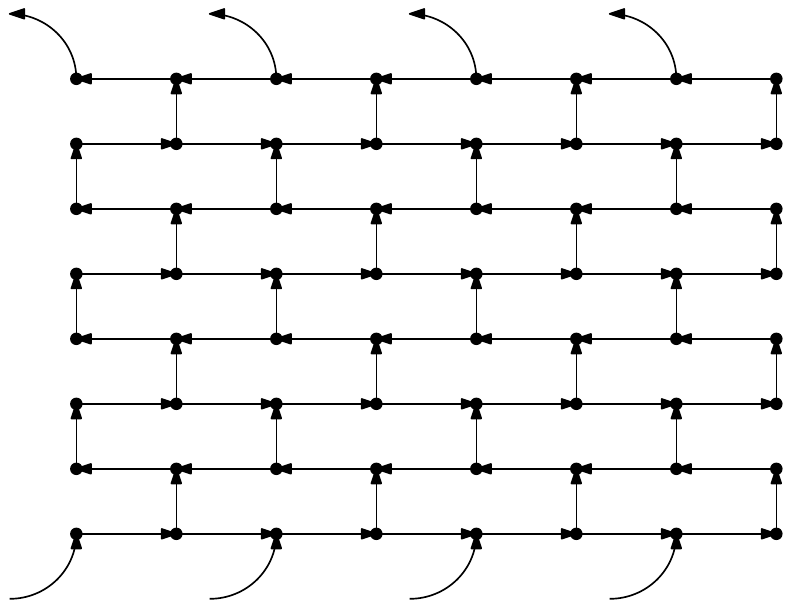}
    \caption{Illustration of the cylindrical wall of order $4$. The indicated half-arcs loop back from the top-row of the wall to the bottom-row (without crossings).}
    \label{fig:wall}
\end{figure}
\noindent Recall that  a \emph{subdivision} of a digraph $F$ is a digraph obtained from $F$ by replacing (a subset of) its arcs by internally disjoint directed paths, each of which starts and ends in the same vertices as the arc of $F$ it replaces.
\begin{theorem}[Directed grid theorem, Kawarabayashi and Kreutzer 2015,~\cite{MR3388245}]\label{thm:grid}
There exists a function $g:\mathbb{N}\rightarrow \mathbb{N}$ such that for every $k\in \mathbb{N}$, every digraph $D$ with $\mathrm{dtw}(D)\ge g(k)$ contains a subdivision of $W_k$. 
\end{theorem}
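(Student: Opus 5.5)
The statement is the Kawarabayashi--Kreutzer theorem, which the paper imports rather than proves, so what follows is only the outline I would follow to re-derive it. The argument goes through a chain of intermediate obstructions of increasing structure: large directed tree-width, then a highly linked set, then a large path system, then a web, then a fence, then a cylindrical grid, then a subdivided $W_k$. At each link the order parameter drops by some (huge) function, and $g$ is the composition of these functions.

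\textbf{Step 1: from tree-width to a well-linked set.} I would use the duality between directed tree-width and havens/brambles from~\cite{MR1828440}. If $\mathrm{dtw}(D)\ge g(k)$, there is a haven of large order. From it one extracts a set $A$ of size roughly $g(k)$ that is \emph{well-linked*}: for any disjoint $X,Y\subseteq A$ with $|X|=|Y|$, there are $|X|$ vertex-disjoint directed $X$--$Y$ paths. This follows from Menger's theorem together with the fact that small separators cannot hit every bramble element.

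\textbf{Step 2: from a well-linked set to a path system and a web.} Using well-linkedness repeatedly, I would build a \emph{path system*}: many disjoint paths $P_1,\dots,P_m$, each carrying large linked sets on its two ends, and linked to one another in both directions. From two large families of disjoint paths that intersect heavily, I would extract a \emph{web*}: a family $\mathcal{P}$ of disjoint paths, each of which meets many paths of a second family $\mathcal{Q}$, with well-linkedness between the endpoint sets. The tool here is Ramsey-type cleaning: repeatedly pass to subfamilies whose intersection patterns are ordered consistently, in the spirit of Erd\H{o}s--Szekeres.

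\textbf{Step 3: from a web to a fence, and then to $W_k$.} This is the main obstacle. One must untangle the intersection pattern of $\mathcal{P}$ and $\mathcal{Q}$ so that every $Q\in\mathcal{Q}$ meets the paths of $\mathcal{P}$ in the same order. That yields a \emph{fence*}: an acyclic grid-like structure of alternating horizontal and vertical paths. I would attempt it through the linkage/rerouting machinery, namely irrelevant-vertex-free rerouting and ``path-system splitting'' lemmas. When direct cleaning fails, the fallback is the dichotomy that the web either contains a large ordered sub-web or a large well-linked obstruction on which one recurses.

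Finally, I would close the fence into a cylinder. The top of the fence is linked back to the bottom by many disjoint paths, which exist by the well-linkedness of the endpoint sets from Step 1. These back-paths are then routed so that the result contains a subdivision of $W_k$, possibly after again passing to a sub-fence to avoid crossings.

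Steps 1 and 2 are comparatively routine. Step 3, the ordering and cleaning inside the web, is where the real difficulty of~\cite{MR3388245} lies, and it produces the non-elementary size of $g$.
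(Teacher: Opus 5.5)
Your proposal matches the paper's treatment. The paper gives no proof of Theorem~\ref{thm:grid} and imports it from Kawarabayashi and Kreutzer, and your chain (well-linked set, path system, web, fence, cylindrical grid, subdivided $W_k$) is a fair high-level summary of the architecture of their argument. Your sketch is a roadmap rather than a proof, though: the web-to-fence untangling and the closing of the fence are only named, and in the closing step the back-linkage may run through the fence itself, which passing to a sub-fence does not obviously fix. So the statement rests on the citation, exactly as in the paper.
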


Combining our Theorem~\ref{thm:main2} with Theorem~\ref{thm:grid} directly yields the following new result.

\begin{corollary}\label{cor:wall}
There exists a function $f:\mathbb{N}\rightarrow \mathbb{N}$ such that every regular digraph $D$ with degree at least $f(k)$ contains a subdivision of $W_k$.
\end{corollary}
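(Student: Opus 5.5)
The plan is to compose Theorem~\ref{thm:main2} with the directed grid theorem, Theorem~\ref{thm:grid}. Let $g:\mathbb{N}\rightarrow\mathbb{N}$ be the function from Theorem~\ref{thm:grid}. We want the degree $r$ of a regular digraph to be large enough that its directed tree-width, bounded below by Theorem~\ref{thm:main2}, is at least $g(k)$. So we define
$$f(k):=20\,g(k)+20.$$
The additive slack is there to absorb the floor in Theorem~\ref{thm:main2}. Any choice with $\lfloor f(k)/20\rfloor\ge g(k)$ works; in fact $20g(k)$ already suffices.

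Now let $D$ be an $r$-regular digraph with $r\ge f(k)$. By Theorem~\ref{thm:main2},
$$\mathrm{dtw}(D)\ge\left\lfloor\frac{r}{20}\right\rfloor\ge\left\lfloor\frac{f(k)}{20}\right\rfloor\ge g(k).$$
Theorem~\ref{thm:grid} then gives a subdivision of $W_k$ in $D$, as required.

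There is no real obstacle; the difficulty of the corollary lies entirely in Theorem~\ref{thm:main2}. The only points to check are these:
\begin{itemize}
\item the floor is handled, which the definition of $f$ does;
\item "regular digraph with degree at least $f(k)$" means $r$-regular for some $r\ge f(k)$, so Theorem~\ref{thm:main2} applies with that $r$;
\item $f$ indeed maps $\mathbb{N}$ to $\mathbb{N}$, which is clear since $g$ does.
\end{itemize}
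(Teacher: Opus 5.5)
Your proof is correct and is exactly the paper's argument: the paper obtains the corollary directly by combining Theorem~\ref{thm:main2} with the directed grid theorem (Theorem~\ref{thm:grid}). Your choice of $f(k)=20g(k)+20$ (or simply $20g(k)$) correctly absorbs the floor in $\lfloor r/20\rfloor$.
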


Let us say that a digraph $F$ is a \emph{topological wall minor} if there exists a subdivision of $F$ which is isomorphic to a subdigraph of $W_k$ for some $k\in \mathbb{N}$. This forms a rich class of planar digraphs of maximum degree at most $3$, including arbitrary orientations of paths and cycles (a precise characterization of this class can be derived from the recent paper~\cite{MR4657706}). Pause to note that Corollary~\ref{cor:wall} has the following consequence.
\begin{corollary}\label{cor:top}
For every topological wall minor $F$ there exists a constant $K=K(F)$ such that every regular digraph of degree at least $K$ contains a subdivision of $F$.  
\end{corollary}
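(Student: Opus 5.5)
The plan is to derive Corollary~\ref{cor:top} directly from Corollary~\ref{cor:wall}, using transitivity of the subdivision relation. Let $F$ be a topological wall minor. By definition there is some $k=k(F)\in\mathbb{N}$ and a subdivision $F'$ of $F$ isomorphic to a subdigraph of $W_k$. Set $K(F):=f(k)$, where $f$ is the function from Corollary~\ref{cor:wall}. Then every regular digraph $D$ of degree at least $K$ contains a subdigraph $S$ which is a subdivision of $W_k$.

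The key step is to extract from $S$ a subdivision of $F$. First, fix the subdigraph $H\subseteq W_k$ isomorphic to $F'$. In the subdivision $S$, each vertex of $W_k$ corresponds to a branch vertex, and each arc of $W_k$ corresponds to a directed path; these paths are internally disjoint. Take the branch vertices corresponding to $V(H)$, together with the paths corresponding to arcs of $H$. Their union is a subdigraph of $S$ (hence of $D$) which is a subdivision of $H\cong F'$.

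It remains to observe that a subdivision of a subdivision of $F$ is again a subdivision of $F$. Each arc of $F$ is replaced in $F'$ by a directed path $P$, which starts and ends at the branch vertices. Each arc of $P$ is in turn replaced by a directed path in the subdivision of $F'$. The concatenation of these paths is a directed path with the correct endpoints. Internal disjointness is preserved, because distinct arcs of $F'$ receive internally disjoint paths whose interiors avoid the branch vertices of $F'$. The branch vertices of $F'$ that are subdivision vertices of $F$ become interior vertices of exactly one concatenated path.

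There is no real obstacle here. The only point needing care is that arc directions and endpoints are respected in the concatenation, which holds because all the replacement paths are directed consistently with the arcs they replace.
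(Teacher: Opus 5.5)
Your proposal is correct and follows the paper's route exactly. The paper states Corollary~\ref{cor:top} as an immediate consequence of Corollary~\ref{cor:wall} (with $K(F)=f(k)$), and you supply the details it leaves implicit: restricting the subdivision of $W_k$ to the copy of the subdivided $F$, and using that a subdivision of a subdivision of $F$ is again a subdivision of $F$.
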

We include Corollary~\ref{cor:top} since it makes progress on the notoriously difficult problem of finding degree conditions for digraphs that guarantee their containment of a subdivision of a fixed digraph: While a classical result, independently due to Bollob\'{a}s-Thomason~\cite{MR1657911} and Koml\'{o}s-Szemer\'{e}di~\cite{MR1288443,MR1395694} guarantees for each graph $F$ the existence of a number $K=K(F)$ such that every graph of minimum degree at least $K$ contains a subdivision of $F$, the analogous result is false in full generality for digraphs: A construction of Thomassen~\cite{MR793491} (and also Remark~\ref{rem:outindtw}) imply that there exist digraphs of arbitrarily large minimum out- and in-degree which do not contain a subdivision of the complete digraph $\bivec{K}_3$ on $3$ vertices. In response to this construction, Mader~\cite{MR815582} made the following well-known conjecture.
\begin{conjecture}[Mader, 1985~\cite{MR815582}]\label{con:mader}
For every acyclic digraph $F$ there exists a number $K=K(F)$ such that every digraph $D$ with $\delta^+(D)\ge K$ contains a subdivision of $F$.
\end{conjecture}
Despite being a natural generalization of the aforementioned results of Bollob\'{a}s-Thomason and Koml\'{o}s-Szemer\'{e}di to directed graphs and having received a significant amount of attention~\cite{MR3992294,MR4757005, MR4155283,MR4543727,MR2370179,MR3906644,MR1364025,MR1377610}, Conjecture~\ref{con:mader} remains known only for precious few digraphs $F$, such as arbitrary orientations of paths and cycles, certain orientations of trees and acyclic digraphs on at most four vertices. Corollary~\ref{cor:top} shows that subdivisions of digraphs in a significantly richer class of (not necessarily acyclic) digraphs can be guaranteed in regular digraphs.

\textbf{Organization and proof overview.} In Section~\ref{sec:aux} we collect several useful observations and prove the key lemmas for the proof of our main results. In Section~\ref{sec:proof} we then give the proofs of Theorem~\ref{thm:main}, Proposition~\ref{prop:upper} and Theorem~\ref{thm:main2}. Finally, in Section~\ref{sec:conc} we conclude with open problems.

Even though our proofs of Theorems~\ref{thm:main} and~\ref{thm:main2} are short, we deem it helpful to give a high-level overview and an explanation of our proof strategies, which are atypical for directed graph theory.

Coming from undirected graph theory, a natural approach to proving results like Theorem~\ref{thm:main} and~\ref{thm:main2} is to use a foundational result of Mader~\cite{MR306050} to pass from the original (dense) graph to a highly connected subgraph, and then use the high connectivity of this subgraph to build the desired substructure or to certify high tree-width in this subgraph. In fact, this exact strategy gives straightforward proofs that every undirected graph of average degree at least $r$ contains $\Omega(r)$ openly disjoint cycles through a common vertex and has tree-width $\Omega(r)$.

However, such strategies almost never work for analogous embedding problems in directed graphs, since in order to build a \emph{strongly connected} substructure such as openly disjoint directed cycles through a vertex or a certificate for high directed tree-width, one would want to guarantee a subdigraph of high \emph{strong} vertex-connectivity. This, however, turns out be impossible, even when assuming regularity: For example (and this just one of many examples of negative results in this direction for directed graphs), Mader~\cite{MR2588727} constructed for every $r\in \mathbb{N}$ an $r$-regular digraph without any strongly $2$-arc-connected subdigraphs. Therefore,  such proof approaches seem hopeless for embedding strongly connected structures in digraphs.

With this in mind, our proof strategies for Theorems~\ref{thm:main} and~\ref{thm:main2} are quite non-standard for the study of strongly connected structures in digraphs (and this is perhaps the most important conceptual contribution of the paper the hand): Starting from an $r$-regular digraph $D$, we pursue a purely density-based approach to produce a subdigraph $D'$ which is still rather dense and has no small \emph{undirected} edge cuts between reasonably large vertex-sets (Lemma~\ref{lem:edgeconnectivity}). However we cannot guarantee any good lower bound on the \emph{strong} connectivity of $D'$, which in fact may be close to being acyclic and not even strongly $2$-connected. Hence, just within $D'$ we will in general not be able to directly find the desired collection of openly disjoint directed cycles through a vertex or a certificate for high directed tree-width. Instead, we will use $D'$ solely for the purpose of finding a ``good'' special vertex $v$ with large in- and out-degree, even after possibly deleting a small set of vertices from our digraph $D$. Crucially, we can then guarantee undirected edge-connectivity $\Omega(r^2)$ between the out- and in-neighborhood of $v$ in $D'$ (and hence also in $D$). We then go back to considering the whole of $D$. Using the fact that $D$ is an Eulerian digraph and Menger's theorem we can then essentially turn undirected edge-connectivity $\Omega(r^2)$ between the out- and in-neighborhood of $v$ into  strong vertex-connectivity $\Omega(r)$ between the same. We then use this to show that there exists a large collection of openly disjoint directed cycles through $v$, respectively to certify high directed tree-width of $D$ by identifying a so-called \emph{linked set} (the latter statement is somewhat oversimplified, but gives the correct intuition). 

We believe that this new high-level approach may be of independent interest and has potential to lead to further new results on finding more general classes of subdivisions and minors in regular digraphs of high degree.

\medskip

\textbf{Notation and Terminology.}
Given $n\in \mathbb{N}$ we denote by $[n]=\{1,\ldots,n\}$ the set of the first $n$ natural numbers. For an undirected graph $G$, as usual, we denote by $V(G)$ its vertex- and by $E(G)$ its edge-set.
Given a digraph $D$, we denote by $V(D)$ its vertex-set and by $A(D)\subseteq \{(u,v)\in V(D)^2|u\neq v\}$ the set of its arcs (where for an arc $(u,v)$ we think of $u$ as its tail/starting point and $v$ as its head/endpoint). We also use $\mathrm{v}(D):=|V(D)|$ and $\mathrm{a}(D):=|A(D)|$ as a shorthand for the number of vertices and arcs, respectively. For a vertex $v\in V(D)$, we denote by $N_D^+(v), N_D^-(v)$ its out- and in-neighborhood, respectively, and by $d_D^+(v):=|N_D^+(v)|, d_D^-(v):=|N_D^-(v)|$ its out- and in-degree. We also denote by $\delta^+(D)$, $\delta^-(D)$, $\Delta^+(D)$, $\Delta^-(D)$ the minimum out-, minimum in-, maximum out- and maximum in-degree of $D$, respectively.

Given a subset of vertices $X\subseteq V(D)$, we denote by $D[X]$ the subdigraph of $D$ induced by $X$ and by $D-X:=D[V(D)\setminus X]$ the subdigraph obtained by deleting $X$. We also write $D-v$ in case $X=\{v\}$ is a single vertex. Finally, for two disjoint subsets $X,Y$ of vertices of a digraph $D$ we denote by $\mathrm{a}_D(X,Y)$ the number of arcs in $D$ starting in $X$ and ending in $Y$, and we denote by $\mathrm{a}_D[X,Y]=\mathrm{a}_D[Y,X]:=\mathrm{a}_D(X,Y)+\mathrm{a}_D(Y,X)$ the number of all arcs in $D$ spanned between $X$ and $Y$ (in any direction).

\section{Auxiliary results}\label{sec:aux}
In this section, we collect several useful observations and key lemmas that prepare the proofs of our main results. We start by recalling the classic Menger-theorem in directed graphs, concretely the \emph{set-version} for \emph{vertex-disjoint} directed paths.
\begin{lemma}[Menger's theorem---set version]\label{lem:menger}
Let $D$ be a digraph, $U,W\subseteq V(D)$ and $k\in \mathbb{N}$. Then exactly one of the following two statements holds.
\begin{itemize}
    \item There exist $k$ vertex-disjoint directed paths in $D$, each with first vertex in $U$ and last vertex in $W$.
    \item There is a set $S\subseteq V(D)$ with $|S|\le k-1$ such that there exists no directed path in $D-S$ starting in $U$ and ending in $W$.
\end{itemize}
\end{lemma}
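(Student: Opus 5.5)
This is the classical set-version of Menger's theorem, and the plan is to derive it from the single-source/single-sink vertex version by adding auxiliary terminals, proving that version in turn by max-flow min-cut after splitting vertices. We first check that the two statements cannot both hold. Suppose $P_1,\ldots,P_k$ are vertex-disjoint $U$--$W$ paths and $S$ is a set with $|S|\le k-1$ meeting no $U$--$W$ path in $D-S$. Every $P_i$ is a $U$--$W$ path in $D$, so every $P_i$ must contain a vertex of $S$. Since the paths are disjoint, these vertices are distinct, which gives $|S|\ge k$, a contradiction.

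For the converse, assume there are no $k$ disjoint $U$--$W$ paths; we must produce a small separator $S$. We form an auxiliary digraph $D'$ from $D$ as follows:
\begin{itemize}
\item add a new source $s$ with arcs $(s,u)$ for all $u\in U$;
\item add a new sink $t$ with arcs $(w,t)$ for all $w\in W$;
\item split each original vertex $x$ into $x^-$ and $x^+$, joined by an arc $(x^-,x^+)$ of capacity $1$;
\item direct every original arc $(x,y)$ as $(x^+,y^-)$ with capacity $k$ (effectively infinite);
\item give the arcs at $s$ and $t$ capacity $k$ as well.
\end{itemize}
Integral $s$--$t$ flows of value $m$ in $D'$ decompose, after discarding cycles, into $m$ paths that use each split arc at most once. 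Hence they correspond exactly to $m$ vertex-disjoint $U$--$W$ paths in $D$. A vertex in $U\cap W$ gives a trivial one-vertex path, which is consistent with the convention that such a path has first and last vertex in $U\cap W$.

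By the integral max-flow min-cut theorem (Ford--Fulkerson), the maximum flow has value less than $k$. So there is an $s$--$t$ cut of capacity at most $k-1$. Because every non-split arc has capacity $k$, this cut consists solely of split arcs $(x^-,x^+)$. We let $S$ be the set of the corresponding vertices, so $|S|\le k-1$. Any $U$--$W$ path in $D-S$ would lift to an $s$--$t$ path in $D'$ avoiding the cut, which is impossible.

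The only delicate step is the bookkeeping that makes the flow-path correspondence exact. This means handling vertices in $U\cap W$ and ensuring the minimum cut never uses an original arc, which the capacity-$k$ choice guarantees. Everything else is standard, so in the paper one may simply cite Menger's theorem; the above is the self-contained route I would take if a proof were required.
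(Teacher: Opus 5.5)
Your proof is correct, but it takes a different route from the paper, which does not prove this lemma at all. The paper simply recalls it as the classical set version of Menger's theorem and uses it as a black box, chiefly in Corollary~\ref{cor:sep} with $U=N_D^+(v)$ and $W=N_D^-(v)$.

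What you supply is the standard self-contained derivation. The exclusivity direction holds because each of the $k$ disjoint paths must meet $S$ in a distinct vertex. The nontrivial direction goes via vertex splitting and integral max-flow/min-cut. Giving every non-split arc capacity $k$ forces a cut of capacity at most $k-1$ to consist only of split arcs, which yields $S$ directly.

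Your route does not reduce the statement to anything more elementary, since integral max-flow/min-cut and Menger's theorem are essentially equivalent. So in the paper, citing Menger is the natural choice. What your approach buys is an explicit check of the bookkeeping.

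Two points you should state explicitly rather than leave implicit:
\begin{itemize}
\item After splitting, the terminal arcs must be $(s,u^-)$ and $(w^+,t)$, not $(s,u^+)$ or $(w^-,t)$. Otherwise flow could bypass the unit split arc of an endpoint, and several paths could share their first or last vertex.
\item Your treatment of $U\cap W$ via one-vertex paths $s\to x^-\to x^+\to t$ is exactly what the paper needs in Corollary~\ref{cor:sep}. There, a vertex in $N_D^+(v)\cap N_D^-(v)$ gives a $2$-cycle through $v$.
\end{itemize}
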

\noindent We record the following simple consequence of Menger's theorem, which we will use repeatedly.
\begin{corollary}\label{cor:sep}
    Let $D$ be a digraph. Then for every $v\in V(D)$ there exists some $S\subseteq V(D)$ with $|S|\le c(D)$ and a partition $(A,B)$ of $V(D)\setminus S$ such that $N_D^+(v)\subseteq A\cup S, N_D^-(v)\subseteq B\cup S$ and such that there are no arcs in $D$ which start in $A$ and end in $B$.
\end{corollary}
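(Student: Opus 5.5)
Fix $v\in V(D)$ and consider the auxiliary digraph $D'$ obtained from $D-v$. Paths in $D-v$ from $N_D^+(v)$ to $N_D^-(v)$ correspond to cycles through $v$, and vertex-disjoint such paths correspond to openly disjoint cycles through $v$. One degenerate case must be handled: a vertex $u$ lying in both $N_D^+(v)$ and $N_D^-(v)$ gives a $2$-cycle $v\to u\to v$. I will treat it as a one-vertex path from $U:=N_D^+(v)$ to $W:=N_D^-(v)$, which Lemma~\ref{lem:menger} allows since paths may start and end at the same vertex.

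\textbf{Step 1.} Apply Lemma~\ref{lem:menger} in $D-v$ with $U=N_D^+(v)$, $W=N_D^-(v)$ and $k=c(D)+1$. If there were $c(D)+1$ vertex-disjoint $U$--$W$ paths, closing each with $v$ would give $c(D)+1$ openly disjoint cycles through $v$, contradicting the maximality of $c(D)$. Hence the second alternative holds: there is $S\subseteq V(D)\setminus\{v\}$ with $|S|\le c(D)$ such that $D-v-S$ has no directed path from $U$ to $W$.

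\textbf{Step 2.} Define the partition. Let $A$ be the set of vertices of $D-S$ reachable in $D-v-S$ from $U\setminus S$, together with $v$ itself. Let $B:=V(D)\setminus (S\cup A)$. Then:
\begin{itemize}
\item $N_D^+(v)\subseteq A\cup S$ holds immediately.
\item No vertex of $W\setminus S$ lies in $A\setminus\{v\}$, by the choice of $S$.
\item No arc of $D-v-S$ leaves the reachable set, since the set is closed under out-neighbours.
\end{itemize}

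\textbf{Step 3.} Check the details concerning $v$. The arcs out of $v$ go into $U\subseteq A\cup S$, so they do not end in $B$. The vertex $v$ itself is not in $N_D^-(v)$ since there are no loops, and $W\setminus S$ is disjoint from $A$. Hence $N_D^-(v)\subseteq B\cup S$.

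One could instead put $v$ into $S$, but that would make $|S|$ as large as $c(D)+1$, so keeping $v$ in $A$ is the right choice. Alternatively $v$ could go in $B$: the arcs from $v$ all land in $A\cup S$, so an arc from $A$ into $B$ would have to end at $v$. Such an arc does occur if some in-neighbour of $v$ is in $A$, which is excluded; it remains fine as long as the arcs are checked.

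The only step needing care is exactly this placement of $v$, and verifying that no arc runs from $A$ to $B$ including arcs incident to $v$. Everything else is a direct application of Lemma~\ref{lem:menger}.
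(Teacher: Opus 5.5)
Your proof is correct and is essentially the paper's argument: Menger's theorem with $U=N_D^+(v)$, $W=N_D^-(v)$ and $k=c(D)+1$, followed by taking $A$ to be the set reachable from $U\setminus S$ once $S$ is deleted. The only difference is cosmetic: you apply Menger in $D-v$, so the paths avoid $v$ automatically and you place $v$ into $A$ by hand (valid because every out-neighbour of $v$ lies in $A\cup S$), whereas the paper applies Menger in $D$ itself, trims the paths to show they avoid $v$, and then $v$ lands in $B\cup S$ automatically.
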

\begin{proof}
    Apply Lemma~\ref{lem:menger} to $D$ with $U:=N_D^+(v)$, $W:=N_D^-(v)$ and $k:=c(D)+1$. 
    
    Suppose first that the first outcome of Lemma~\ref{lem:menger} holds, i.e. there exists a family of $k$ disjoint directed paths $P_1,\ldots,P_k$ in $D$ starting in $U$ and ending in $W$. By moving to appropriate subpaths, we may w.l.o.g. assume that for each path $P_i$ its first vertex is its only vertex in $U$ and its last vertex is its only vertex in $W$. This, in particular, implies that $v\notin V(P_i)$ for each $i$, as otherwise the out-neighbor of $v$ on the path would be a member of $N_D^+(v)=U$ distinct from the first vertex of $P_i$, a contradiction. It follows from this that for each $i\in [k]$ we can obtain a directed cycle $C_i$ from $P_i$ by adding to it the vertex $v$ and the arcs from $v$ to the first vertex of $P_i$ as well as from the last vertex of $P_i$ to $v$. Also, since $P_1,\ldots,P_k$ are pairwise vertex-disjoint, it follows that $C_1,\ldots,C_k$ are a family of $k>c(D)$ openly disjoint directed cycles through the common vertex~$v$, contradicting the definition of $c(D)$. Hence, the first outcome of Lemma~\ref{lem:menger} is impossible, and so the second one has to hold: There exists a set $S\subseteq V(D)$ with $|S|\le k-1=c(D)$ such that there is no directed path in $D-S$ starting in $U$ and ending in $W$. Now let $A$ be defined as the set of all vertices that can be reached in $D-S$ via a directed path starting in $U$. Then clearly $A\supseteq U\setminus S=N_D^+(v)\setminus S$ and hence $A\cup S\supseteq N_D^+(v)$. Furthermore, since no directed path in $D-S$ can start in $U$ and end in $W$, we have $A\cap W=\emptyset$. Defining $B:=V(D)\setminus (A\cup S)$ this implies that $B\cup S=V(D)\setminus A\supseteq W=N_D^-(v)$. Furthermore, the definition of $A$ directly implies that no arc in $D-S$ leaves $A$, i.e., there is no arc in $D$ from $A$ to $B$. Hence $(A,B)$ is a partition of $V(D)\setminus S$ with the desired properties, concluding the proof of the corollary.
\end{proof}

For the proof of Theorem~\ref{thm:main2}, it will be useful to introduce the definition of \emph{linked sets}, which constitutes one of several ways of witnessing that a digraph has large directed tree-width.

\begin{definition}[cf.~\cite{MR1966425}]
Let $D$ be a digraph and $k\in  \mathbb{N}$. A subset of vertices $L\subseteq V(D)$ is called \emph{$k$-linked} if for every $S\subseteq V(D)$ with $|S|<k$ there is a strongly connected component of the digraph $D-S$ containing more than half of the vertices\footnote{Note that there can be only one strongly connected component of $D$ with this property, so this strongly connected component is uniquely defined.} of $L$.
\end{definition}

In the proof of Theorem~\ref{thm:main2} we will require the following lower bound on the directed tree-width in terms of linked sets.

\begin{lemma}[cf.~\cite{MR1966425}]\label{lem:linked}
Every digraph $D$ containing a $k$-linked set satisfies $\mathrm{dtw}(D)\ge k-1$.
\end{lemma}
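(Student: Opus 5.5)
We argue by contradiction, working directly with the definition of directed tree-width through arboreal decompositions~\cite{MR1828440}. Suppose $L$ is $k$-linked in $D$ and that $(R,\mathcal{X},\mathcal{W})$ is an arboreal decomposition of width $w\le k-2$. Here $R$ is an arborescence, the bags $W_t$ ($t\in V(R)$) partition $V(D)$, and each arc $e=(t,s)$ of $R$ carries a guard $X_e$ such that $W_{\ge s}:=\bigcup_{s'\text{ below or equal } s}W_{s'}$ is $X_e$-guarded. That is, no directed walk in $D-X_e$ starts and ends in $W_{\ge s}\setminus X_e$ while visiting a vertex outside $W_{\ge s}\cup X_e$. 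For a node $t$ put $\Gamma(t):=W_t\cup\bigcup_{e\sim t}X_e$, where $e\sim t$ means that $e$ is incident with $t$ (in- and out-arcs alike). Width $w$ gives $|\Gamma(t)|\le w+1\le k-1<k$. If $k=1$ the statement is trivial, so we may assume $k\ge 2$.

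For every node $t$, apply $k$-linkedness with $S:=\Gamma(t)$. This gives a strongly connected component $C_t$ of $D-\Gamma(t)$ containing more than $|L|/2$ vertices of $L$; in particular $C_t\neq\emptyset$. Since $W_t\subseteq\Gamma(t)$, the set $C_t$ is disjoint from $W_t$.

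The key observation is a localisation claim. If $e=(t,s)$ is an arc of $R$ with $X_e\subseteq\Gamma(t)$ and $C_t$ meets $W_{\ge s}$, then $C_t\subseteq W_{\ge s}$. Indeed, if $C_t$ contained a vertex $y\notin W_{\ge s}$, then strong connectivity of $C_t$ inside $D-X_e$ would give a closed walk from a vertex of $C_t\cap W_{\ge s}$ through $y$ and back that avoids $X_e$, contradicting the guarding property. The same argument applies to the in-arc $(p,t)$ of $t$. Its guard also lies in $\Gamma(t)$, so either $C_t\subseteq W_{\ge t}$ or $C_t\cap W_{\ge t}=\emptyset$.

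Combining this with $C_t\cap W_t=\emptyset$, the subtrees below the children of $t$ partition $W_{\ge t}\setminus W_t$. Hence exactly one of two things happens at each node $t$: either $C_t\subseteq W_{\ge s}$ for a unique child $s$ of $t$, or $C_t\subseteq V(D)\setminus W_{\ge t}$. The second option is impossible at the root, since $W_{\ge \text{root}}=V(D)$ and $C_t\ne\emptyset$. It is also forced at a leaf, where $W_{\ge t}=W_t$.

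Now walk down from the root, always moving to the child $s$ with $C_t\subseteq W_{\ge s}$. This walk must stop at some node $s\ne$ root where $C_s\subseteq V(D)\setminus W_{\ge s}$, while its parent $p$ satisfies $C_p\subseteq W_{\ge s}$. Both $C_p$ and $C_s$ contain more than half of the vertices of $L$, so they share a vertex $x\in L$. But $x\in C_p\subseteq W_{\ge s}$ and $x\in C_s\subseteq V(D)\setminus W_{\ge s}$, a contradiction. Hence $\mathrm{dtw}(D)\ge k-1$.

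The main point needing care is the localisation claim. One must check that the guard of every arc incident with $t$, including the in-arc, lies inside $\Gamma(t)$, and that the guarding condition excludes exactly the closed walks produced by strong connectivity. With the convention of~\cite{MR1828440}, where guards are taken in $D-X_e$ and walks must leave $W_{\ge s}\cup X_e$, this goes through directly.
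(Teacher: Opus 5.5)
Your proof is correct, but it takes a different route from the paper. The paper never works with arboreal decompositions. It defines $\rho(S)$ as the unique strong component of $D-S$ containing more than half of $L$. It then checks that $S\subseteq S'$ implies $\rho(S)\supseteq\rho(S')$: $\rho(S')$ lies inside some strong component of $D-S$, and that component then also holds a majority of $L$. Finally it quotes Theorem~3.1 of Johnson et al., that a haven of order $k$ forces $\mathrm{dtw}(D)\ge k-1$.

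You instead argue directly from the definition of directed tree-width, and in effect reprove the needed direction of that theorem for linked sets. Your localisation claim is the standard fact that a strong component avoiding a guard $X_e$ lies entirely inside or entirely outside $W_{\ge s}$. The walk down the arborescence is the usual search for an arc $(p,s)$ where the side flips. The only place you use linkedness rather than haven axioms is the last step: the majority property makes $C_p$ and $C_s$ meet in $L$. A haven proof would instead compare $\rho(\Gamma(p))$ and $\rho(\Gamma(s))$ with $\rho(X_{(p,s)})$ via monotonicity.

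The paper's version is shorter and modular. It also never has to state, or commit to a convention for, arboreal decompositions. Yours is self-contained and needs only the definition of $\mathrm{dtw}$, at the price of fixing those conventions. You handle them correctly: $C_t$ avoids all of $\Gamma(t)$, which contains the guards of every arc incident with $t$. So the closed walk you build is forbidden whether or not guards are allowed to meet the guarded set.
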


Reed~\cite{MR1966425} claims that Lemma~\ref{lem:linked} was proved by Johnson et al.~\cite{MR1828440}, however we could not find this statement explicitly in their paper. However, they prove a more general statement involving another notion called \emph{havens}, from which Lemma~\ref{lem:linked} can be directly deduced. For the sake completeness, we shall include this very short deduction here.

\begin{definition}[cf.~\cite{MR1828440}]
Let $D$ be a digraph and $k\in \mathbb{N}$. A \emph{haven of order $k$} in $D$ is a function $\rho$ assigning to every subset $S$ of $V(D)$ with $|S|<k$ a strongly connected component $\rho(S)$ of $D-S$ such that the following holds.

For every pair of sets $S\subseteq S'\subseteq V(D)$ with $|S|\le |S'|<k$ we have $\rho(S)\supseteq \rho(S')$.
\end{definition}
The following statement was explicitly proved by Johnson et al.
\begin{lemma}[Johnson et al., cf.~Theorem 3.1 in~\cite{MR1828440}]\label{lem:johnson1}
Every digraph $D$ with a haven of order $k$ satisfies $\mathrm{dtw}(D)\ge k-1$.  
\end{lemma}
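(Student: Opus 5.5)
The plan is to argue by contradiction directly from the definition of an arboreal decomposition in~\cite{MR1828440}. Suppose $D$ has a haven $\rho$ of order $k$ and an arboreal decomposition $(R,\mathcal{X},\mathcal{W})$ of width at most $k-2$. Here $R$ is an arborescence with root $r_0$, the bags $(W_t)_{t\in V(R)}$ partition $V(D)$, and each arc $e=(s,t)$ of $R$ carries a guard $X_e$. For every node $t$ write $W_{\ge t}$ for the union of the bags in the subtree rooted at $t$. The guard condition says that $W_{\ge t}$ is $X_e$-guarded: there is no directed walk in $D-X_e$ that starts and ends in $W_{\ge t}\setminus X_e$ but uses a vertex outside $W_{\ge t}$. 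For a node $t$ let $S_t:=W_t\cup\bigcup_{e\sim t}X_e$, the union of its bag and all guards on arcs incident to it. The width assumption gives $|S_t|\le k-1<k$, so $\rho(S_t)$ is defined for every node $t$.

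I would walk down $R$ from the root while keeping the following invariant at the current node $t$: $\rho(S_t)\subseteq W_{\ge t}$.

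\emph{Base case.} At the root this is trivial, since $W_{\ge r_0}=V(D)$.

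\emph{Inductive step.} Suppose the invariant holds at $t$. Because $W_t\subseteq S_t$, the component $\rho(S_t)$ avoids $W_t$, so it meets $W_{\ge t'}$ for some child $t'$ of $t$. Set $e=(t,t')$. Since $X_e\subseteq S_t$, $\rho(S_t)$ is a strongly connected subdigraph of $D-X_e$. It contains a vertex of $W_{\ge t'}\setminus X_e$, and every vertex of it lies on a closed walk through that vertex inside $D-X_e$. So if $\rho(S_t)$ had a vertex outside $W_{\ge t'}$, we would get a walk violating the guard condition for $e$. Hence $\rho(S_t)\subseteq W_{\ge t'}$.

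Now pass to $t'$ using the monotonicity of the haven.
\begin{itemize}
\item Since $X_e\subseteq S_t$, monotonicity gives $\rho(X_e)\supseteq\rho(S_t)$. So $\rho(X_e)$ is a strong component of $D-X_e$ meeting $W_{\ge t'}\setminus X_e$, and the same guarding argument yields $\rho(X_e)\subseteq W_{\ge t'}$.
\item Since $X_e\subseteq S_{t'}$, monotonicity gives $\rho(S_{t'})\subseteq\rho(X_e)\subseteq W_{\ge t'}$.
\end{itemize}
This re-establishes the invariant at $t'$. Note that the step only needs the inclusion $X_e\subseteq S_t\cap S_{t'}$ and monotonicity; comparability of $S_t$ and $S_{t'}$ is never required.

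\emph{Contradiction.} Since $R$ is finite, the walk eventually reaches a leaf $t$. There $W_{\ge t}=W_t\subseteq S_t$, so the invariant forces $\rho(S_t)\subseteq W_t\setminus S_t=\emptyset$. This is impossible because $\rho(S_t)$ is a strongly connected component and hence nonempty. Therefore no arboreal decomposition of width at most $k-2$ exists, and $\mathrm{dtw}(D)\ge k-1$.

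The main point needing care is the guarding step: turning strong connectivity of a component of $D-X_e$ into containment in $W_{\ge t'}$. One has to check that the precise definition of ``guarded'' in~\cite{MR1828440} (walks leaving $W_{\ge t'}\setminus X_e$ and returning, all within $D-X_e$) is exactly what is used above, including the edge case where vertices of the component lie in $X_e$. That case cannot occur, since the component lives in $D-X_e$. Everything else is bookkeeping with monotonicity of $\rho$.
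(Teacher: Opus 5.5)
Your proof is correct. The paper does not prove this lemma itself; it quotes it from Johnson, Robertson, Seymour and Thomas. Your argument is essentially the standard one behind that result. Guardedness of $W_{\ge t'}$ with respect to $X_e$ forces any strong component of $D-X_e$ that meets $W_{\ge t'}$ (in particular $\rho(X_e)$) to lie entirely inside $W_{\ge t'}$. Following the haven down the arborescence must therefore end at a node $t$ where the nonempty component $\rho(S_t)$ has nowhere to live.

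One small simplification is possible. In the inductive step you only need that $\rho(S_t)$ meets $W_{\ge t'}$. Then $\rho(X_e)\supseteq\rho(S_t)$ meets $W_{\ge t'}\setminus X_e$, and the guarding argument applies directly to $\rho(X_e)$. So your first bullet, deriving $\rho(S_t)\subseteq W_{\ge t'}$, is redundant, though harmless.
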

We can now prove Lemma~\ref{lem:linked}. 
\begin{proof}[Proof of Lemma~\ref{lem:linked}]
Let $L$ be a $k$-linked set in $D$. For any subset $S\subseteq V(D)$ with $|S|<k$, let $\rho(S)$ be defined as the (unique) strongly connected component of $D-S$ containing more than half of the vertices of $L$. We claim that $\rho$ is a haven of order $k$, and then Lemma~\ref{lem:johnson1} will imply that $\mathrm{dtw}(D)\ge k-1$, as desired. So let $S\subseteq S'\subseteq V(D)$ with $|S|\le |S'|<k$ be given. Then clearly the strongly connected component $\rho(S')$ of $D-S'$ is a strongly connected subdigraph of $D-S$, and hence is contained in a strongly connected component of $D-S$, which we call $H$. By definition of $\rho$ we have that $\rho(S')$ contains more than half of the vertices in $L$, and hence the same is true for $H$. But then (since strongly connected components are disjoint) it has to be the unique strongly connected component of $D-S$ with this property, and it follows that $\rho(S)=H$ by definition of $\rho$. Hence we have $\rho(S)=H\supseteq \rho(S')$, as desired. This shows that $\rho$ is indeed a haven of order $k$ for $D$ and concludes the proof.
\end{proof}

\noindent Finally, we will also need the following statement, showing that the directed tree-width of the symmetric orientation of an undirected graph coincides with its undirected tree-width.
\begin{lemma}[cf.~Theorem~2.1 in~\cite{MR1828440}]\label{lem:symmetric}
Let $F$ be an undirected graph, and let $D$ be the digraph with $V(D)=V(F)$ and $A(D)=\{(u,v),(v,u)|uv\in E(F)\}$. Then $\mathrm{dtw}(D)=\mathrm{tw}(F)$.
\end{lemma}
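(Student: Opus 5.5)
The plan is to prove the two inequalities $\mathrm{dtw}(D)\le \mathrm{tw}(F)$ and $\mathrm{dtw}(D)\ge \mathrm{tw}(F)$ separately. I would use the definition of directed tree-width from~\cite{MR1828440}: an arborescence $R$, bags $W_r$ partitioning $V(D)$, and guards $W_e$ on the arcs, with width $\max_r |W_r\cup\bigcup_{e\sim r}W_e|-1$. The guiding observation is that in $D$ a set $Z$ is ``guarded'' by $X$ (every directed walk leaving $Z$ and returning to $Z$ meets $X$) exactly when every undirected path in $F$ leaving and returning to $Z$ meets $X$. Since every edge of $F$ is a directed 2-cycle in $D$, this means $Z\setminus X$ is a union of components of $F-X$, i.e. $X$ separates $Z\setminus X$ from the rest in $F$.

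\textbf{Upper bound.} Take an optimal tree decomposition $(T,\{B_t\})$ of $F$ with $|B_t|\le \mathrm{tw}(F)+1$. Root $T$ and orient it away from the root to obtain an arborescence $R$. For each vertex $x$, let $t(x)$ be the node closest to the root whose bag contains $x$; set $W_t:=\{x: t(x)=t\}$, and for an arc $e=(s,t)$ set $W_e:=B_s\cap B_t$. For each arc $e=(s,t)$, the union of the $W_{t'}$ over $t'$ in the subtree below $t$ consists of the vertices appearing only in bags of that subtree, minus $B_s\cap B_t$. By the separation property of tree decompositions, this set is separated in $F$ by $B_s\cap B_t$, so it is guarded in $D$. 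For the width, $W_t$ and all guards on arcs incident to $t$ lie in $B_t$. The guard on the arc entering $t$ is contained in $B_t$, and so are the guards on the arcs leaving $t$. Hence the width is at most $|B_t|-1\le \mathrm{tw}(F)$.

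\textbf{Lower bound.} I would use the well-known fact that $\mathrm{tw}(F)\ge k-1$ implies a bramble of order $k$, equivalently an undirected haven of order $k$ in $F$. An undirected haven $\beta$ assigns to each $S$ with $|S|<k$ a component $\beta(S)$ of $F-S$, with $\beta(S')\subseteq\beta(S)$ whenever $S\subseteq S'$. Since the strongly connected components of $D-S$ are exactly the components of $F-S$, the same map is a directed haven of order $k=\mathrm{tw}(F)+1$ in $D$. Lemma~\ref{lem:johnson1} then gives $\mathrm{dtw}(D)\ge \mathrm{tw}(F)$.

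\textbf{Main obstacle.} The difficulty lies in the lower bound. It rests on the tree-width duality theorem of Seymour and Thomas, which must be imported, together with care about the off-by-one conventions between havens and width. If instead one is to argue directly, the hard step is converting an arbitrary directed tree decomposition of $D$ into an undirected tree decomposition of $F$ with bags $W_r\cup\bigcup_{e\sim r}W_e$. That requires checking the connectivity condition of the resulting bags using the guarding property, and I expect this verification to be the fiddly part.
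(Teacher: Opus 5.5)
Your proof is correct. It differs from the paper only because the paper gives no proof at all: it imports the statement as Theorem~2.1 of Johnson, Robertson, Seymour and Thomas.

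Your route makes the lemma self-contained modulo two citations. For $\mathrm{dtw}(D)\le\mathrm{tw}(F)$ you give an explicit conversion of a tree decomposition of $F$ into a directed one. For $\mathrm{dtw}(D)\ge\mathrm{tw}(F)$ you use Seymour--Thomas duality plus Lemma~\ref{lem:johnson1}, which the paper already relies on. This matters because the only direction the paper needs (in Observation~\ref{obs:densebound}) is $\mathrm{tw}(F)\le\mathrm{dtw}(D)$. By your haven argument, that direction costs nothing beyond Lemma~\ref{lem:johnson1} and the undirected duality theorem.

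There is one technicality in your upper bound. In the original definition the sets $W_r$ must be nonempty, and your $W_t$ is empty whenever $B_t\subseteq B_s$ for the parent $s$ of $t$. The fix is to start from a tree decomposition in which no bag is contained in an adjacent bag, obtained by contracting such edges without increasing the width. Then every non-root $t$ has a vertex of $B_t\setminus B_s$, and that vertex's topmost node is $t$. Also, your ``minus $B_s\cap B_t$'' is vacuous, since vertices all of whose bags lie below $t$ already avoid $B_s$.

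Your decision to avoid the direct conversion is sound. The sets $W_r\cup\bigcup_{e\sim r}X_e$ can violate the subtree axiom. A vertex $x\in W_a$ may be forced into a guard on an arc in another branch of $R$ (to cut off a neighbour there), while the guards on the path down to $a$ contain only that neighbour and not $x$. That route would therefore need an extra clean-up step, which the haven argument avoids entirely.
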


This finishes our discussion of the auxiliary results from the literature. In the remainder of this section we will prove two key lemmas for our main results.

Throughout the remainder of this paper, we 
will repeatedly use the following key definition: Given a natural number $r\in\mathbb{N}$ and reals $\beta, \gamma>0$, a digraph $D$ will be called $(r,\beta,\gamma)$-\emph{dense} if $\mathrm{v}(D)\ge \gamma r$ and $\mathrm{a}(D)\ge r\cdot \mathrm{v}(D)-\beta r^2$. The following statement allows to pass from any $(r,\beta,\gamma)$-dense digraph to an $(r,\beta,\gamma)$-dense subdigraph with decent undirected edge-connectivity between reasonably large sets of vertices.
\begin{lemma}\label{lem:edgeconnectivity}
Let $r\in \mathbb{N}$ and $\beta,\gamma>0$. Then every $(r,\beta,\gamma)$-dense digraph $D$ contains an $(r,\beta,\gamma)$-dense subdigraph $D'\subseteq D$ such that for every partition $(X,Y)$ of $V(D')$ with $|X|, |Y|\ge \gamma r$, it holds that $\mathrm{a}_{D'}[X,Y]> \beta r^2$. 
\end{lemma}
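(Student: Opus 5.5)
Take $D'$ to be a minimal subdigraph of $D$ (minimal with respect to the number of vertices) among all $(r,\beta,\gamma)$-dense subdigraphs of $D$. Such a $D'$ exists because $D$ itself is a candidate and $D$ is finite. Since we only need the arc condition for $D'$, we may also assume $D'$ is induced: passing to $D[V(D')]$ only adds arcs and keeps density. We then show that this minimal $D'$ has the required cut property, arguing by contradiction.

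Suppose $(X,Y)$ is a partition of $V(D')$ with $|X|,|Y|\ge \gamma r$ and $\mathrm{a}_{D'}[X,Y]\le \beta r^2$. The goal is to show that $D'[X]$ or $D'[Y]$ is $(r,\beta,\gamma)$-dense. Both are proper subdigraphs (since $Y,X\neq\emptyset$), and both satisfy the vertex condition $\mathrm{v}\ge\gamma r$ by assumption. So either one being dense contradicts the minimality of $D'$.

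For the arc count, suppose both fail, so $\mathrm{a}(D'[X])< r|X|-\beta r^2$ and $\mathrm{a}(D'[Y])< r|Y|-\beta r^2$. Every arc of $D'$ lies inside $X$, inside $Y$, or between $X$ and $Y$. Hence
$$\mathrm{a}(D')=\mathrm{a}(D'[X])+\mathrm{a}(D'[Y])+\mathrm{a}_{D'}[X,Y]< r|X|+r|Y|-2\beta r^2+\beta r^2=r\,\mathrm{v}(D')-\beta r^2.$$
This contradicts the density of $D'$. So at least one of $D'[X]$, $D'[Y]$ is dense, which gives the desired contradiction.

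The key point is the choice of the density threshold $\mathrm{a}\ge r\mathrm{v}-\beta r^2$. Its deficit term $\beta r^2$ is subadditive in exactly the right way: splitting into two parts costs $2\beta r^2$ in deficits but only $\beta r^2$ in cut arcs. I expect no real obstacle beyond two checks. First, the minimality must be taken over vertex count, so that both sides count as proper. Second, the strict inequality in the conclusion must match the strictness coming from the contradiction; if both sides fail, their arc counts are strictly below the threshold, so the cut must exceed $\beta r^2$ strictly.
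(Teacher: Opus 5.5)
Your proof is correct and matches the paper's argument. Both choose $D'$ minimizing $\mathrm{v}(D')$ among $(r,\beta,\gamma)$-dense subdigraphs, note that minimality rules out density of $D'[X]$ and $D'[Y]$, and add up the arc counts to contradict the density of $D'$. The reduction to an induced $D'$ is harmless but not needed.
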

\begin{proof}
Let $D'\subseteq D$ be chosen among all $(r,\beta,\gamma)$-dense subdigraphs of $D$ such that $\mathrm{v}(D')$ is minimized. Towards a contradiction, suppose there exists a partition $(X,Y)$ of $V(D')$ with $|X|,|Y|\ge \gamma r$ such that $\mathrm{a}_{D'}[X,Y]\le\beta r^2$. Since the subdigraphs $D'[X]$ and $D'[Y]$ of $D$ both have at least $\gamma r$ vertices but cannot be $(r,\beta,\gamma)$-dense due to our choice of $D'$, it follows that $\mathrm{a}(D'[X])<r|X|-\beta r^2$, $\mathrm{a}(D'[Y])<r|Y|-\beta r^2$. Hence, we find that
$$\mathrm{a}(D')=\mathrm{a}(D'[X])+\mathrm{a}(D'[Y])+\mathrm{a}_{D'}[X,Y]<(r|X|-\beta r^2)+(r|Y|-\beta r^2)+\beta r^2=r\mathrm{v}(D')-\beta r^2,$$ contradicting that $D'$ is $(r,\beta,\gamma)$-dense. This concludes the proof of the lemma.
\end{proof}
The next observation gives a simple upper bound on the number of arcs for digraphs with small $c(D)$ or small $\mathrm{dtw}(D)$, which will later be useful in cases when $D$ has few vertices.

\begin{observation}\label{obs:densebound}
For every digraph $D$, we have $\mathrm{a}(D)< \frac{\mathrm{v}(D)(\mathrm{v}(D)+\min\{c(D),2\mathrm{dtw}(D)\})}{2}$.
\end{observation}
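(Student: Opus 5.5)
The plan is to prove the two bounds $\mathrm{a}(D)<\frac{n(n+c(D))}{2}$ and $\mathrm{a}(D)<\frac{n(n+2\mathrm{dtw}(D))}{2}$ separately, where $n:=\mathrm{v}(D)$. The observation is their conjunction. In both cases we count \emph{digons}, i.e.\ unordered pairs $\{u,w\}$ with both $(u,w)$ and $(w,u)$ in $A(D)$. If $m$ denotes the number of digons, then
$$\mathrm{a}(D)\le \binom{n}{2}+m,$$
since every unordered pair contributes at most one arc unless it is a digon, in which case it contributes two. Hence it suffices to show $m<\frac{n(c(D)+1)}{2}$ and $m<\frac{n(2\mathrm{dtw}(D)+1)}{2}$, because $\binom{n}{2}+m<\frac{n(n-1)}{2}+\frac{n(k+1)}{2}=\frac{n(n+k)}{2}$.

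\textbf{Bound via $c(D)$.} Let $F$ be the undirected graph on $V(D)$ whose edges are the digons. For any vertex $v$, each digon $\{v,w\}$ gives a directed $2$-cycle $v\to w\to v$. These cycles are openly disjoint through $v$, since they pairwise share only $v$. Therefore $\deg_F(v)\le c(D)$ for every $v$, so $m\le \frac{n\,c(D)}{2}<\frac{n(c(D)+1)}{2}$. This part is immediate.

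\textbf{Bound via $\mathrm{dtw}(D)$.} Let $D_F$ be the symmetric digraph of $F$ from Lemma~\ref{lem:symmetric}. It is a subdigraph of $D$. We first use monotonicity of directed tree-width under taking subdigraphs, which is standard (cf.~\cite{MR1828440}), to get $\mathrm{dtw}(D_F)\le \mathrm{dtw}(D)$. Lemma~\ref{lem:symmetric} then gives $\mathrm{tw}(F)=\mathrm{dtw}(D_F)\le \mathrm{dtw}(D)=:t$. Lemma~\ref{lem:twedgebound} then yields $m=|E(F)|\le tn$, and we need $tn<\frac{n(2t+1)}{2}=tn+\frac n2$. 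This holds whenever $n\ge1$.

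\textbf{Degenerate case and main obstacle.} If $n=0$, both sides are $0$ and the strict inequality fails. I would treat this as a vacuous case excluded by convention, since digraphs are nonempty. The one genuine point to justify is the subdigraph monotonicity of $\mathrm{dtw}$. If it cannot simply be cited, I would argue directly: restricting an arboreal decomposition of $D$ to $V(D_F)=V(D)$ keeps every guard condition valid, because removing arcs can only destroy directed walks, never create them. Everything else is elementary counting.
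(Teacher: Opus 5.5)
Your proof is correct and is essentially the paper's argument: count digons via the auxiliary graph $F$, bound its maximum degree by $c(D)$ and its edge count by $\mathrm{dtw}(D)\,\mathrm{v}(D)$ via Lemma~\ref{lem:symmetric} and Lemma~\ref{lem:twedgebound}, then use $\mathrm{a}(D)\le\binom{n}{2}+|E(F)|$. Two points the paper leaves implicit are made explicit in your write-up: why $\mathrm{dtw}$ does not increase under passing to a spanning subdigraph (the same arboreal decomposition works), and that $n\ge 1$ is needed for the strict inequality.
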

\begin{proof}
Let $F$ be the undirected graph with $V(F):=V(D)$ and $$E(F):=\{xy|(x,y),(y,x)\in A(D)\}.$$ Then by definition of $c(D)$, we can see that $F$ has maximum degree at most $c(D)$, and hence by the handshake-lemma at most $\frac{c(D)\mathrm{v}(D)}{2}$ edges. Furthermore, since the tree-width of $F$ equals the directed tree-width of its symmetric orientation by Lemma~\ref{lem:symmetric}, and since the latter is a subdigraph of $D$ by definition, we find that $\mathrm{tw}(F)\le \mathrm{dtw}(D)$. Lemma~\ref{lem:twedgebound} now implies that $F$ has at most $\mathrm{dtw}(D)\mathrm{v}(D)$ edges. All in all, these two facts imply that $|E(F)|\le \frac{\min\{c(D),2\mathrm{dtw}(D)\}}{2}\mathrm{v}(D)$. 

It follows directly from this that 
$$\mathrm{a}(D)\le \left(\binom{\mathrm{v}(D)}{2}-|E(F)|\right)+2|E(F)|<\frac{\mathrm{v}(D)^2}{2}+|E(F)|\le \frac{\mathrm{v}(D)(\mathrm{v}(D)+\min\{c(D),2\mathrm{dtw}(D)\})}{2},$$ as desired.
\end{proof}
The following is a key technical lemma which guarantees a vertex of high out- and in-degree in any $(r,\beta,\gamma)$-dense digraph with bounded maximum degree and small $c(D)$ or small $\mathrm{dtw}(D)$.
\begin{lemma}\label{lem:highoutandin}
Let $r\in \mathbb{N}$, and let $\alpha,\beta,\gamma>0$ be such that $2\beta<\left(1-\frac{\alpha}{2}\right)^2$ and $$\gamma\ge \left(1-\frac{\alpha}{2}\right)-\sqrt{\left(1-\frac{\alpha}{2}\right)^2-2\beta}.$$ Then for every $(r,\beta,\gamma)$-dense digraph $D$ with $\Delta^+(D), \Delta^-(D)\le r$ and $\min\{c(D),2\mathrm{dtw}(D)\}<\alpha r$ we have $\mathrm{v}(D)\ge \left(\left(1-\frac{\alpha}{2}\right)+\sqrt{\left(1-\frac{\alpha}{2}\right)^2-2\beta}\right)r$.
Suppose further that $0<\delta<\min\{1/2,1-\sqrt{\beta}\}$ is such that
$$\left(1-\frac{\alpha}{2}\right)+\sqrt{\left(1-\frac{\alpha}{2}\right)^2-2\beta}\ge 2((1-\delta)-\sqrt{(1-\delta)^2-\beta}) $$ and
$$\frac{\beta}{1/2-\delta}\le 2((1-\delta)+\sqrt{(1-\delta)^2-\beta}).$$
Then $D$ contains a vertex $v$ such that $$\min\{d_D^+(v),d_D^-(v)\}\ge \delta r.$$
\end{lemma}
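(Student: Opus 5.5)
The plan is to compare the lower bound on $\mathrm{a}(D)$ coming from density with upper bounds coming either from Observation~\ref{obs:densebound} or from degree counting. Each comparison yields a quadratic (or linear) inequality in $n:=\mathrm{v}(D)$. Throughout, write $s:=n/r$.

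\textbf{First claim.} Combine $\mathrm{a}(D)\ge rn-\beta r^2$ with Observation~\ref{obs:densebound} and $\min\{c(D),2\mathrm{dtw}(D)\}<\alpha r$. This gives $rn-\beta r^2<\frac{n(n+\alpha r)}{2}$, which rearranges to
$$s^2-2\left(1-\tfrac{\alpha}{2}\right)s+2\beta>0.$$
Since $2\beta<(1-\frac{\alpha}{2})^2$, this quadratic has two real roots $s_\pm=(1-\frac{\alpha}{2})\pm\sqrt{(1-\frac{\alpha}{2})^2-2\beta}$. The strict inequality forces $s<s_-$ or $s>s_+$. But $s\ge\gamma\ge s_-$ by density and the hypothesis on $\gamma$, so the first option is impossible. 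Hence $s>s_+$, which is the claimed bound.

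\textbf{Second claim: partition and arc count.} Suppose for contradiction that every vertex $v$ has $\min\{d^+(v),d^-(v)\}<\delta r$. Let $X$ be the set of vertices with $d^+<\delta r$ and put $Y:=V(D)\setminus X$, so every vertex of $Y$ has $d^-<\delta r$. Write $x=|X|/r$ and $y=|Y|/r$. Every arc either starts in $X$, or ends in $Y$, or goes from $Y$ to $X$. Hence
$$\mathrm{a}(D)<\delta r|X|+\delta r|Y|+\mathrm{a}_D(Y,X)=\delta rn+\mathrm{a}_D(Y,X).$$
I will bound $\mathrm{a}_D(Y,X)$ in two ways. The first is $\mathrm{a}_D(Y,X)\le|X||Y|\le n^2/4$. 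The second is $\mathrm{a}_D(Y,X)\le r\min\{|X|,|Y|\}\le rn/2$, which uses $\Delta^+,\Delta^-\le r$.

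\textbf{Second claim: the two bounds.} Using the first bound together with density gives $(1-\delta)s-\beta<s^2/4$, that is,
$$s^2-4(1-\delta)s+4\beta>0.$$
Because $\delta<1-\sqrt\beta$, the roots $2\big((1-\delta)\pm\sqrt{(1-\delta)^2-\beta}\big)$ are real. The first part of the lemma and the first displayed hypothesis give $s\ge 2\big((1-\delta)-\sqrt{(1-\delta)^2-\beta}\big)$. As in the first claim, this forces $s>2\big((1-\delta)+\sqrt{(1-\delta)^2-\beta}\big)$. Using the second bound instead gives $(1-\delta)s-\beta<s/2$. Since $\delta<1/2$, this rearranges to $s<\frac{\beta}{1/2-\delta}$. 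The second displayed hypothesis says $\frac{\beta}{1/2-\delta}\le 2\big((1-\delta)+\sqrt{(1-\delta)^2-\beta}\big)$, so the two bounds on $s$ contradict each other. Therefore some vertex $v$ has $\min\{d^+(v),d^-(v)\}\ge\delta r$.

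The main obstacle is finding the right decomposition of the arcs. All arcs except those from $Y$ to $X$ are controlled by the small degrees, and it is exactly the two different bounds on $\mathrm{a}_D(Y,X)$ that produce the two hypotheses on $\delta$. The remaining care is bookkeeping: the strict inequalities and the boundary cases where $s$ equals a root are ruled out because all the relevant inequalities are strict.
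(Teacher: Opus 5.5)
Your proof is correct and follows the paper's argument essentially step for step. The first part uses the same quadratic obtained from Observation~\ref{obs:densebound}. The second part uses the same partition into low-out-degree and low-in-degree vertices, together with the same two bounds $|X||Y|\le n^2/4$ and $r\min\{|X|,|Y|\}\le rn/2$ on the arcs from $Y$ to $X$; you merely apply these two bounds in the opposite order.
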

\begin{proof}
We start by proving the first part of the lemma concerning the number of vertices of $D$. Throughout the rest of this proof, it will be convenient to set $x:=\frac{\mathrm{v}(D)}{r}$, and our first goal is to show $x\ge \left(1-\frac{\alpha}{2}\right)+\sqrt{\left(1-\frac{\alpha}{2}\right)^2-2\beta}$. 

Recalling that $\min\{c(D),2\mathrm{dtw}(D)\}<\alpha r$ by assumption, Observation~\ref{obs:densebound} yields that
$$r\cdot \mathrm{v}(D)-\beta r^2\le \mathrm{a}(D)<\frac{\mathrm{v}(D)(\mathrm{v}(D)+\min\{c(D),2\mathrm{dtw}(D)\})}{2}<\frac{1}{2}\mathrm{v}(D)^2+\frac{\alpha r}{2}\mathrm{v}(D).$$

Dividing the previous inequality by $r^2$ we obtain:
$$x-\beta<\frac{1}{2}x^2+\frac{\alpha}{2}x.$$

Rearranging, we find that
$$x^2+(\alpha-2)x>-2\beta.$$
This can be further simplified to 
$$\left(x-\left(1-\frac{\alpha}{2}\right)\right)^2>\left(1-\frac{\alpha}{2}\right)^2-2\beta.$$ This implies that $x$ does not lie in the interval 
$$\left[\left(1-\frac{\alpha}{2}\right)-\sqrt{\left(1-\frac{\alpha}{2}\right)^2-2\beta},\left(1-\frac{\alpha}{2}\right)+\sqrt{\left(1-\frac{\alpha}{2}\right)^2-2\beta}\right].$$ Since $D$ is $(r,\beta,\gamma)$-dense, we have $x\ge \gamma\ge \left(1-\frac{\alpha}{2}\right)-\sqrt{\left(1-\frac{\alpha}{2}\right)^2-2\beta}$, and hence it follows that in fact, $x>\left(1-\frac{\alpha}{2}\right)+\sqrt{\left(1-\frac{\alpha}{2}\right)^2-2\beta}$, as desired. Hence $\mathrm{v}(D)>\left(\left(1-\frac{\alpha}{2}\right)-\sqrt{\left(1-\frac{\alpha}{2}\right)^2-2\beta}\right)r$, concluding the proof of the first part of the lemma.

Now suppose that $\delta\in (0,1/2)$ satisfies the inequalities stated in the second part of the lemma, and let us show that there exists a vertex $v$ in $D$ with out- and in-degree at least $\delta r$.
Towards a contradiction, suppose this is not the case, i.e. every vertex $v\in V(D)$ satisfies $d_D^+(v)<\delta r$ or $d_D^-(v)<\delta r$. Let $A, B$ be a partition of $V(D)$ such that $d_D^+(a)<\delta r$ for every $a\in A$ and $d_D^-(b)<\delta r$ for every $b\in B$. Since every arc in $D$ starts in $A$, or ends in $B$, or starts in $B$ and ends in $A$, we find that
$$r\cdot \mathrm{v}(D)-\beta r^2\le \mathrm{a}(D)\le \sum_{a\in A}d_D^+(a)+\sum_{b\in B}d_D^-(b)+\mathrm{a}_D(B,A)$$
$$<\delta r\cdot (|A|+|B|)+\min\{\Delta^+(D)\cdot |B|,\Delta^-(D)\cdot |A|\}$$
$$\le \delta r \cdot \mathrm{v}(D)+\frac{r\mathrm{v}(D)}{2}=\left(\delta+\frac{1}{2}\right)r\cdot \mathrm{v}(D).$$

Rearranging yields $\mathrm{v}(D)\le \frac{\beta}{\frac{1}{2}-\delta}r$, i.e. $x\le \frac{\beta}{\frac{1}{2}-\delta}$. In particular, $$x\in \left[\left(1-\frac{\alpha}{2}\right)+\sqrt{\left(1-\frac{\alpha}{2}\right)^2-2\beta},\frac{\beta}{\frac{1}{2}-\delta}\right].$$

Redoing the first steps of the previous estimate, but bounding $\mathrm{a}_D(B,A)$ now by the trivial upper bound $|A||B|\le \left(\frac{|A|+|B|}{2}\right)^2=\frac{1}{4}\mathrm{v}(D)^2$, we next obtain that
$$r\cdot \mathrm{v}(D)-\beta r^2\le \mathrm{a}(D)< \delta r\cdot \mathrm{v}(D)+\frac{1}{4}\mathrm{v}(D)^2.$$
Dividing by $r^2$, we obtain $x-\beta< \delta x+\frac{1}{4}x^2$. Rearranging now yields $(x-2(1-\delta))^2>4((1-\delta)^2-\beta)$. Taking roots, we obtain $|x-2(1-\delta)|>2\sqrt{(1-\delta)^2-\beta}$. This implies that $x$ does not lie in the interval $[2((1-\delta)-\sqrt{(1-\delta)^2-\beta}),2((1-\delta)+\sqrt{(1-\delta)^2-\beta})]$. This, however, is a contradiction, since we have previously shown that $$x\in \left[\left(1-\frac{\alpha}{2}\right)+\sqrt{\left(1-\frac{\alpha}{2}\right)^2-2\beta},\frac{\beta}{\frac{1}{2}-\delta}\right]\subseteq [2((1-\delta)-\sqrt{(1-\delta)^2-\beta}),2((1-\delta)+\sqrt{(1-\delta)^2-\beta})].$$ This shows that our above assumption was wrong: $D$ must indeed contain a vertex $v$ with out- and in-degree at least $\delta r$. This concludes the proof of the second part of the lemma. 
\end{proof}
\section{Proofs of Theorem~\ref{thm:main}, Proposition~\ref{prop:upper} and Theorem~\ref{thm:main2}}\label{sec:proof}
With the auxiliary results from the previous section at hand, we are now ready to present the proofs of our main results. We start with Theorem~\ref{thm:main}.
\begin{proof}[Proof of Theorem~\ref{thm:main}]
Throughout this proof, we fix the parameters $\alpha:=\frac{3}{22}, \beta:=\frac{3}{11},$ $\delta:=\gamma:=\frac{4}{11}$.
Let $r\in \mathbb{N}$ and an $r$-regular digraph $D$ be given to us. We have to show that $c(D)\ge \lceil \alpha r\rceil$. Towards a contradiction, suppose that $c(D)<\alpha r$. Note that since $D$ is $r$-regular, it satisfies $\mathrm{v}(D)\ge r+1\ge \gamma r$ and $\mathrm{a}(D)=\sum_{v\in V(D)}d_D^+(v)=r\cdot \mathrm{v}(D)\ge r\cdot \mathrm{v}(D)-\beta r^2$. Hence, $D$ is $(r,\beta,\gamma)$-dense. Hence, by Lemma~\ref{lem:edgeconnectivity} there exists an $(r,\beta,\gamma)$-dense subdigraph $D'$ of $D$ such that for every partition $(X,Y)$ of $V(D')$ with $|X|, |Y|\ge \gamma r$, it holds that $\mathrm{a}_{D'}[X,Y]> \beta r^2$. We clearly have that $\Delta^+(D')\le \Delta^+(D)=r$, $\Delta^-(D')\le \Delta^-(D)=r$ and $c(D')\le c(D)<\alpha r$ by assumption. Pause to verify that the parameters $\alpha,\beta,\gamma,\delta$ as defined above satisfy all pre-conditions of Lemma~\ref{lem:highoutandin}. Hence, we may apply Lemma~\ref{lem:highoutandin} to $D'$ and find that $\mathrm{v}(D')\ge \frac{3}{2}r$ and that there exists a vertex $v\in V(D')$ with $d_{D'}^+(v), d_{D'}^-(v)\ge \delta r=\gamma r$. 
We next apply Corollary~\ref{cor:sep} to the vertex $v$ in the digraph $D$, and obtain a set $S\subseteq V(D)$ with $|S|\le c(D)<\alpha r$ and a partition $(A,B)$ of $V(D)\setminus S$ such that there are no arcs from $A$ to $B$ in~$D$, and such that $N_D^+(v)\subseteq A\cup S, N_D^-(v)\subseteq B\cup S$. 

Now, let us define $A':=A\cap V(D')$, $B':=B\cap V(D')$ an $S':=S\cap V(D')$. We then have $|A'|+|B'|= \mathrm{v}(D')-|S'|\ge \mathrm{v}(D')-|S|>\frac{3}{2}r-\alpha r>2\gamma r$. Hence, we have $|A'|>\gamma r$ or $|B'|>\gamma r$. 

Let us now define a partition $(X,Y)$ of $V(D')$ as $X:=A'$ and $Y:=B'\cup S'$ if $|A'|>\gamma r$, and as $X:=A'\cup S'$ and $Y:=B'$ if $|A'|\le \gamma r$ (and thus $|B'|>\gamma r$). 

Observe that $A'\cup S'\supseteq N_{D'}^+(v)$ and $B'\cup S'\supseteq N_{D'}^-(v)$. From this and the fact that $d_{D'}^+(v), d_{D'}^-(v)\ge \gamma r$ we can see that the so-defined partition $(X,Y)$ satisfies $|X|, |Y|\ge \gamma r$ in each case. We thus find that $\mathrm{a}_{D'}[X,Y]> \beta r^2$. 

Moving on, let us first consider the case that $(X,Y)=(A'\cup S',B')$. We then have that every arc between $X$ and $Y$ in $D'$ is also an arc between $A\cup S$ and $B$ in $D$. Also note that, since $D$ is $r$-regular and hence Eulerian, we have
$\mathrm{a}_D[A\cup S,B]=2\mathrm{a}_D(A\cup S,B)$. All in all, it follows that

$$\beta r^2\le \mathrm{a}_{D'}[X,Y]\le \mathrm{a}_D[A\cup S, B]=2\mathrm{a}_D(A\cup S, B)=2\mathrm{a}_D(S,B)\le 2r|S|,$$
 where we used that there are no arcs from $A$ to $B$ in the penultimate step. This implies $|S|\ge \frac{\beta }{2}r=\alpha r$, a contradiction to our choice of $S$, finishing the proof in this first case.

Second, let us consider the case that $(X,Y)=(A',B'\cup S')$. We then have that every arc between $X$ and $Y$ in $D'$ is also an arc between $A$ and $B\cup S$ in $D$. Also note that, since $D$ is $r$-regular and hence Eulerian, we have
$\mathrm{a}_D[A,B\cup S]=2\mathrm{a}_D(A,B\cup S)$. Similar as in the previous case, it follows that

$$\beta r^2\le \mathrm{a}_{D'}[X,Y]\le \mathrm{a}_D[A, B\cup S]=2\mathrm{a}_D(A, B\cup S)=2\mathrm{a}_D(A,S)\le 2r|S|,$$
 where we again used that there are no arcs from $A$ to $B$ in the penultimate step. This again implies $|S|\ge \frac{\beta }{2}r=\alpha r$, contradicting our choice of $S$.

 Having reached a contradiction in both cases, we conclude that our initial assumption, namely that $c(D)<\alpha r$, must have been wrong. Hence, it follows that $c(D)\ge \lceil\alpha r\rceil$, as desired. This concludes the proof of the theorem.
\end{proof}

Using Corollary~\ref{cor:sep} from the previous section, we can now also supply the missing proof of Proposition~\ref{prop:upper} (which then implies the upper bound on $c_r$ in Corollary~\ref{cor:upper}).

\begin{proof}[Proof of Proposition~\ref{prop:upper}]
Let $r,b\in \mathbb{N}$ be given. Let $D$ be an $r$-regular digraph such that $c(D)=c_r$.

We now construct a digraph $D'$ on vertex-set $V(D'):=V(D)\times [b]$ as follows: There is an arc form a vertex $(u,i)$ to a vertex $(v,j)$ in $D'$ if and only if $(u,v)\in A(D)$, for any choice of $i,j\in [b]$. Intuitively, $D'$ is the blow-up of $D$ obtained by replacing each vertex $v$ by a corresponding independent set $\{v\}\times [b]$ of size $b$ and placing a directed complete bipartite graph between any two blowup-sets previously connected by an arc. It is immediately verified that $D'$ is an $rb$-regular digraph. 

We now claim that $c(D')\le c_r\cdot b$ (and hence $c_{rb}\le c_r\cdot b$, which is what we want to show). 

Indeed, towards a contradiction suppose that $c(D')\ge c_rb+1$. Then there exists a vertex $(v,i)\in V(D')$ and a collection $C_1,\ldots,C_{c_rb+1}$ of $c_rb+1$ openly disjoint directed cycles through $(v,i)$ in $D'$. On the other hand, Corollary~\ref{cor:sep} implies that there exists a set $S\subseteq V(D)$ with $|S|\le c(D)=c_r$ such that there exists no directed path in $D-S$ starting in $N_D^+(v)$ and ending in $N_D^-(v)$. In particular, there exists no directed walk in $D-S$ starting in $N_D^+(v)$ and ending in $N_D^-(v)$. Now note that since $|S\times [b]|=|S|b\le c_r\cdot b$ and since the sets $V(C_j)\setminus \{(v,i)\}, j=1,\ldots,c_rb+1$ are pairwise disjoint, there must exist some $j\in [c_rb+1]$ such that $(V(C_j)\setminus \{(v,i)\})\cap (S\times [b])=\emptyset$. It is now easy to see that the projection of the vertex-sequence of the directed path $C_j-(v,i)$ in $D'$ to the sequence consisting of the corresponding vertices in $D$ forms a directed walk in $D$ which starts in $N_D^+(v)$, ends in $N_D^-(v)$ and does not use any vertices in $S$. This is a contradiction to what we stated above, and this concludes the proof. Hence indeed we have $c_{rb}\le c(D')\le c_r\cdot b$, as desired.
\end{proof}
Finally, we present the proof of our second main result, Theorem~\ref{thm:main2}, which uses a similar technique as the proof of Theorem~\ref{thm:main}.
\begin{proof}[Proof of Theorem~\ref{thm:main2}]
Throughout this proof, let us fix the parameters $\alpha:=0.1, \beta:=0.1, \gamma:=0.3, \alpha':=\alpha=0.1, \beta':=0.15, \gamma':=1.7$ and $\delta':=\gamma=0.3$.

Let $r\in \mathbb{N}$ and let $D$ be an $r$-regular digraph. Our goal is to show that $\mathrm{dtw}(D)\ge \left\lfloor\frac{r}{20}\right\rfloor=\lfloor \frac{\alpha}{2}r\rfloor$. Suppose towards a contradiction that $\mathrm{dtw}(D)\le\lfloor \frac{\alpha}{2}r\rfloor-1<\frac{\alpha}{2}r$.

Since $D$ is $r$-regular and hence satisfies $\mathrm{a}(D)=r\mathrm{v}(D)\ge r\mathrm{v}(D)-\beta r^2$ as well as $\mathrm{v}(D)\ge r+1\ge \gamma r$, we have that $D$ is $(r,\beta,\gamma)$-dense. Thus, by Lemma~\ref{lem:edgeconnectivity} there exists an $(r,\beta,\gamma)$-dense subdigraph $D'\subseteq D$ such that for every partition $(X,Y)$ of $V(D')$ with $|X|, |Y|\ge \gamma r$, we have that $\mathrm{a}_D[X,Y]> \beta r^2$.

Let $k:=\lfloor \frac{\alpha}{2}r\rfloor+1$ and $L:=V(D')$. By Lemma~\ref{lem:linked} we cannot have that $L$ is $k$-linked in $D$, for otherwise we would have $\mathrm{dtw}(D)\ge k-1=\lfloor \frac{\alpha}{2}r\rfloor$, contradicting our initial assumption. By definition of a $k$-linked set it now follows that there exists some set $S\subseteq V(D)$ with $|S|\le k-1\le \frac{\alpha}{2}r$ such that every strongly connected component of $D-S$ intersects $L$ in at most $\frac{|L|}{2}$ vertices. Let $D'':=D'-S$. Since $D'$ is $(r,\beta,\gamma)$-dense and since every vertex in $D'$ has out- and in-degree at most $r$, we have $$\mathrm{a}(D'')\ge \mathrm{a}(D')-2r|S\cap V(D')|\ge r\cdot \mathrm{v}(D')-2r|S\cap V(D')|-\beta r^2=r\cdot\mathrm{v}(D'')-r|S\cap V(D')|-\beta r^2$$ $$\ge r\cdot \mathrm{v}(D'')-r|S|-\beta r^2\ge r\cdot \mathrm{v}(D'')-\left(\beta+\frac{\alpha}{2}\right)r^2=r\cdot \mathrm{v}(D'')-\beta'r^2.$$

Note that $2\mathrm{dtw}(D')\le 2\mathrm{dtw}(D)<\alpha r$, $\Delta^+(D'), \Delta^-(D')\le r$  and pause to verify that $\alpha,\beta,\gamma$ satisfy the preconditions of the first part of Lemma~\ref{lem:highoutandin}. Hence, we may apply the first part of Lemma~\ref{lem:highoutandin} to $D'$ to find that $$\mathrm{v}(D')\ge \left(\left(1-\frac{\alpha}{2}\right)+\sqrt{\left(1-\frac{\alpha}{2}\right)^2-2\beta}\right)r>1.78r.$$ This implies $\mathrm{v}(D'')\ge \mathrm{v}(D')-|S|> 1.78r-\frac{\alpha}{2}r=1.73r>\gamma' r$. Recalling the lower bound on $\mathrm{a}(D'')$ established previously, we conclude that $D''$ is $(r,\beta',\gamma')$-dense. Since $D''\subseteq D'\subseteq D$, we furthermore have $2\mathrm{dtw}(D'')\le 2\mathrm{dtw}(D)<\alpha r=\alpha'r$ and $\Delta^+(D''), \Delta^-(D'')\le r$. Pause to verify that $\alpha',\beta',\gamma',\delta'$ as defined above satisfy all the necessary inequalities for an application of Lemma~\ref{lem:highoutandin} to $D''$. Hence, the lemma guarantees the existence of a vertex $v\in V(D'')=L\setminus S$ such that $d_{D''}^+(v), d_{D''}^-(v)\ge \delta' r=\gamma r$.

Let us denote by $V^+$ the set of all vertices in $D-S$ which can be reached, starting from $v$, via a directed path in $D-S$. Similarly, let $V^-$ denote the set of all vertices in $D-S$ which can reach $v$ via a directed path in $D-S$. Pause to note that $V^+\cap V^-$ is exactly the vertex-set of the unique strongly connected component of $D-S$ containing $v$. Hence, by our choice of $S$ we know that $V^+\cap V^-$ cannot contain more than half of the vertices of $L$. This implies that $|V^+\cap L|+|V^-\cap L|\le |L|+|(V^+\cap V^-)\cap L|\le \frac{3}{2}|L|$. In particular, we must have $|V^+\cap L|\le \frac{3}{4}|L|$ or $|V^-\cap L|\le \frac{3}{4}|L|$. Moving forward, we w.l.o.g. assume that $|V^+\cap L|\le \frac{3}{4}|L|$ (the case $|V^-\cap L|\le \frac{3}{4}|L|$ can be handled completely symmetrically up to arc-reversal).

Now let us define $X:=V^+\cup S$ and $Y:=V(D)\setminus (V^+\cup S)$ as well as $X':=X\cap L=X\cap V(D'), Y':=Y\cap L=Y\cap V(D')$. Then $(X,Y)$ is a partition of $V(D)$ and $(X',Y')$ a partition of $V(D')$. Note that by definition of $V^+$ and our choice of $v$, we have 
$|X'|=|X\cap L|\ge |V^+\cap L|\ge |N_{D-S}^+(v)\cap L|\ge d_{D''}^+(v)\ge \gamma r$. Furthermore, we have $$|Y'|=|Y \cap L|= |L|-|(V^+\cup S)\cap L|\ge |L|-|V^+\cap L|-|S|\ge \frac{1}{4}|L|-|S|$$
$$=\frac{\mathrm{v}(D')}{4}-|S|>\frac{1.78}{4}r-\frac{\alpha}{2}r=0.395 r>\gamma r.$$
Hence, our initial choice of $D'$ implies that $\mathrm{a}_{D'}[X',Y']> \beta r^2$. In particular, it follows that $\mathrm{a}_D[X,Y]> \beta r^2$. Recalling that $D$ is $r$-regular and hence Eulerian, we furthermore find that $\mathrm{a}_D(X,Y)=\frac{1}{2}\mathrm{a}_D[X,Y]> \frac{\beta}{2}r^2$. On the other hand, by definition of $V^+$, there can be no arc in $D$ starting in $V^+$ and ending in $Y=V(D)\setminus (V^+\cup S)$. Hence, we have $a_D(X,Y)=a_D(S,Y)\le r|S|\le \frac{\alpha}{2}r^2$. All in all, this implies that $\frac{\beta}{2}r^2<a_D(X,Y)\le \frac{\alpha}{2}r^2$, a contradiction since $\alpha=\beta=0.1$ by definition. This shows that our initial assumption in this proof that $\mathrm{dtw}(D)<\left\lfloor\frac{r}{20}\right\rfloor$ was wrong. This shows that indeed $\mathrm{dtw}(D)\ge \left\lfloor\frac{r}{20}\right\rfloor$, concluding the proof of the theorem.
\end{proof}
\section{Conclusion}\label{sec:conc}
The following  is a consequence of Theorem~\ref{thm:main} and Proposition~\ref{prop:upper} and summarizes their essence.
\begin{corollary}
The limit $\lim_{r\rightarrow \infty}\frac{c_r}{r}$ exists and lies between $\frac{3}{22}$ and $\frac{7}{8}$.
\end{corollary}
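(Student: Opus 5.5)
The statement to prove is the final corollary: the limit $\lim_{r\to\infty} c_r/r$ exists and lies in $[\frac{3}{22},\frac{7}{8}]$. The bounds are straightforward. By Theorem~\ref{thm:main}, $c_r/r\ge \lceil\frac{3}{22}r\rceil/r\ge \frac{3}{22}$ for every $r$. Proposition~\ref{prop:upper} with $r=8$, together with $c_8\le 7$, gives $c_{8b}/(8b)\le 7/8$ for every $b$. So once the limit exists, it lies between $\frac{3}{22}$ and $\frac{7}{8}$: the lower bound holds termwise, and the upper bound holds along the subsequence $r=8b$.

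The real work is showing that the limit exists. I will use three inputs: the submultiplicativity-type inequality $c_{rb}\le b\,c_r$ from Proposition~\ref{prop:upper}, the monotonicity $c_i\le c_j$ for $i\le j$ (recalled in the proof of Corollary~\ref{cor:upper}), and the trivial bound $c_r\le r$, so that all ratios $c_r/r$ lie in $[0,1]$. Set $L:=\liminf_{r\to\infty} c_r/r$. It suffices to show $\limsup_{n\to\infty} c_n/n\le L$.

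Fix $\varepsilon>0$ and choose $r$ with $c_r/r\le L+\varepsilon$; we may take $r$ as large as we like, which we will not actually need. For large $n$, write $b:=\lceil n/r\rceil$, so that $n\le rb$. Monotonicity and Proposition~\ref{prop:upper} then give
$$c_n\le c_{rb}\le b\,c_r\le \Big(\frac{n}{r}+1\Big)c_r .$$
Dividing by $n$,
$$\frac{c_n}{n}\le \frac{c_r}{r}+\frac{c_r}{n}\le L+\varepsilon+\frac{r}{n}.$$
Letting $n\to\infty$ with $r$ fixed yields $\limsup_n c_n/n\le L+\varepsilon$. Since $\varepsilon$ was arbitrary, the limsup equals the liminf and the limit exists.

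This Fekete-style step is the only part that is not immediate. Its one delicate point is that Proposition~\ref{prop:upper} gives control only along multiples of $r$; monotonicity of $c_r$ is exactly what fills the gaps between consecutive multiples. Because the error term $c_r/n\le r/n$ vanishes as $n\to\infty$ for fixed $r$, this causes no difficulty.
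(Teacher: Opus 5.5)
Your proposal is correct and follows essentially the paper's own argument. Both proofs are Fekete-type: they combine Proposition~\ref{prop:upper} with the monotonicity of $c_r$ to bound $c_n \le c_{rb} \le b\,c_r$ for the next multiple $rb\ge n$, and both read off the bounds from Theorem~\ref{thm:main} and $c_8\le 7$.

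The only difference is cosmetic. You compare against $\liminf_r c_r/r$ with an $\varepsilon$, whereas the paper compares directly against $\inf_r c_r/r$, which also identifies the limit as that infimum.
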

\begin{proof}
Define $a_r := \frac{c_r}{r}$. We show that $(a_r)_{r\in \mathbb{N}}$ converges. To do so, let us define $L:=\inf_{r\in \mathbb{N} }a_r$ and let us prove that $a_r$ converges to $L$. Observe that it suffices to show that $\lim\sup_{r\rightarrow \infty}a_r\le L$ and $\lim\inf_{r\rightarrow \infty}a_r\ge L$. Since the latter inequality holds trivially by definition of $L$, it suffices to prove the first. For the moment, fix some $r \in \mathbb{N}$ arbitrarily. For any $n \in \mathbb{N}$, write $n = qr + s$ with $q\in \mathbb{N}_0$ and $s\in [r]$. Since $(c_r)_{r\in \mathbb{N}}$ is increasing (as was observed by Mader~\cite{MR2588727}), we have
$
c_n = c_{qr + s} \le c_{(q+1)r}.
$
Furthermore, by Proposition~\ref{prop:upper} we have
$
c_{(q+1)r} \le (q+1)c_r.
$
Hence,
\[
a_n = \frac{c_n}{n} \le \frac{(q+1)c_r}{qr+1}.
\]
Taking the $\lim\sup$ for $n\rightarrow\infty$ (and hence $q\rightarrow \infty$) on both sides, we obtain that
\[
\limsup_{n \to \infty} a_n \le a_r.
\]
Since we initially fixed $r$ arbitrarily, it follows that
\[
\limsup_{n \to \infty} a_n \le \inf_{r \ge 1} a_r = L.
\]

This is what we wanted to show, demonstrating that the limit $\lim_{r\rightarrow \infty}\frac{c_r}{r}$ indeed exists and equals $L$. The stated lower and upper bounds on the limit now immediately follow from Theorem~\ref{thm:main} and Corollary~\ref{cor:upper}.
\end{proof}
It would be very interesting to determine the limit $\lim_{r\rightarrow\infty}\frac{c_r}{r}$ precisely. We leave this as an open problem for future work.
\begin{problem}
    Determine $\lim_{r\rightarrow\infty} \frac{c_r}{r}$.
\end{problem}
Similarly, motivated by Theorem~\ref{thm:main2} it would be interesting to determine the best-possible constant $\varepsilon>0$ such that every $r$-regular digraph $D$ has directed tree-width at least $\varepsilon r$. 
\bibliographystyle{abbrv}
\bibliography{references.bib}

\end{document}